\newtheorem{theorem}{Theorem}[section]
\theoremstyle{definition}
\newtheorem{definition}[theorem]{Definition}
\newtheorem{teorema}{Theorem}
\newtheorem{lema}{Lemma}
\theoremstyle{remark}
\newtheorem{remark}[theorem]{Remark}
\numberwithin{equation}{section}
\begin{document}

\title{On the Ulam--Hyers stabilities of the solutions of $\Psi$-Hilfer fractional differential equation with abstract Volterra operator}

\author{$^{1}$ J. Vanterler da C. Sousa}
\address{$^{1}$ Mathematics faculty, Federal University of Pará,\\ Augusto Corrêa Street, Number 01, 66075-110, Belém, PA, Brazil}
\email{ra160908@ime.unicamp.br}

\author{$^{2}$ Kishor D. Kucche}
\address{$^{2}$Department of Mathematics, Shivaji University,\\ Kolhapur 416 004, Maharashtra, India}
\email{kdkucche@gmail.com}

\author{$^{3}$ E. Capelas de Oliveira}
\address{$^{3}$ Department of Applied Mathematics, Imecc-Unicamp,\\ 13083-859, Campinas, SP, Brazil}
\email{capelas@ime.unicamp.br}

\subjclass[2000]{Primary 26A33; Secondary 34L05, 34K20, 47H10}

\date{January 1, 2001 and, in revised form, June 22, 2001.}


\keywords{$\Psi$-Hilfer fractional derivative, existence, uniqueness, Ulam-Hyers stability,  abstract Volterra operator, Gronwall inequality.}

\begin{abstract}
In this paper, we consider the new class of the fractional differential equation  involving the abstract Volterra operator in the Banach space and investigate existence, uniqueness and  stabilities of Ulam--Hyers on the compact interval $\Delta=[a,b]$ and on the infinite interval $I=[a,\infty )$. Our analysis is based on the application of the Banach fixed point theorem and the Gronwall inequality involving generalized $\Psi$-fractional integral. At last, we performed out an application to elucidate the outcomes got.
\end{abstract}

\maketitle



\section{Introduction}

The study of abstract Volterra equations has long been investigated by a number of researchers such as Kai-Jaung Pei \cite{KaiJaung} and Corduneanu \cite{cordun,cordun1,Corduneanu}. In 2004, Vath \cite{Vath} introduced more general versions of the linear and nonlinear abstract Volterra operator and investigated the existence of local solutions and solution extensions. In 2000, Bedivan and O'Regan \cite{Oregan} investigated the topological structure of the set of solutions for abstract Volterra equations in the Banach and Fr\'echet spaces. In 2010, Serban, Rus and Petrusel \cite{petrusel} investigated the existence, data dependence and comparison results for the solution, using the technique of the operator of Picard. With the progression of time, studies related to the Volterra abstract operator gained interest from a number of consultants, and with this, numerous applications have emerged, such as: control theory, continuous mechanics, nuclear reactor dynamics, linear viscoelastic, among others \cite{Dajun,Laks,kolma}.

On the other hand, the stability investigation of differential and integral equations are important in applications. Hyers, Isac and Rassias \cite{Isac} investigated the stability of functional equations in several variables. Jung \cite{Soon} carried out a work involving fixed point for stability of integral equations of Volterra. Two interesting and important works were investigated by Rassias \cite{Rassias} and Rus \cite{Ioan}, in which they dealt with the stability of a linear mappings in the Banach space and the Ulam stability of ordinary differential equations. It is also important to highlight the work carried out by Otrocol \cite{principal} on Ulam stability of nonlinear differential equations with the abstract Volterra operator in the Banach spaces.

In 2012, Wei, Li and Li \cite{LiLi} investigated new Ulam--Hyers stabilities of Volterra integral equations by means of the fixed-point theorem. Kostic \cite{kostic} has done a work on abstract Volterra integro-differential equations by means of fractional derivative and integral. Other important and interesting works on existence, uniqueness and Ulam--Hyers stabilities, we suggest the references \cite{JERO,exis11}.

The fractional calculus over the time has been gaining increasing prominence in the scientific community \cite{RHM,IP,SAMKO,KSTJ}. With  new definitions  of fractional derivatives and integrals \cite{ZE1,ZE2}, there has arisen some applications in a few territories of learning \cite{apli,apli1,apli2,apli3,apli4,apli6}. There has gained prominence interest in the investigation of existence and Ulam--Hyers stabilities of differential equations and fractional integrals, be they of the impulsive type, functional or of Volterra \cite{JERO,exis8,exis10,imp1,imp2}. Recently, Sousa et al. \cite{est1,est2,est3,est4,est5,est6,est7}, investigated  the existence, uniqueness and stabilities of Ulam--Hyers of fractional differential equations. 

In this sense, by means of the $\Psi$-Hilfer fractional derivative, we introduced a class of nonlinear fractional differential equation with the abstract Volterra operator, in order to investigate a new class of Ulam--Hyers stabilities, contributing with new results and consequently for the growth of the area.

Consider the fractional differential equation with abstract Volterra operator in a Banach space given by 
\begin{equation}\label{sd}
{}^{\mathbf{H}}{\mathbf{D}}_{a^{+}}^{\mu ,\eta ,\Psi }x(t)=f(t,x(t),\widetilde{\mathfrak{W}}%
\left( x\right) (t)]),\qquad t\in \Delta\subset \mathbb{R}
\end{equation}%
where ${}^{\mathbf{H}}{\mathbf{D}}_{a^{+}}^{\mu ,\eta ,\Psi }(\cdot )$ is the $\Psi $-Hilfer fractional derivative of order $0<\mu \leq 1$ and type $
0\leq \eta \leq 1$, $\Delta=[a,b]$ or  $I=[a,\infty )$, $\widetilde{\mathfrak{W}}\in  C((C_{1-\xi;\Psi}[a,b],\mathbb{R}),(C_{1-\xi;\Psi}[a,b],\mathbb{R}))$,  $f\in (I\times 
\mathbb{R}^{2},\mathbb{R})$, where $(\mathbb{R},|\cdot |)$ is a Banach space.

Proposing new results on existence, uniqueness, especially Ulam--Hyers stabilities, is indeed very important and contributes significantly to mathematics, especially for fractional calculus. The fundamental inspiration for  the elaboration of the present work  for Eq.(\ref{sd}) involving abstract Volterra operator originated in order to investigate and propose  existence, uniqueness  and stability results of Ulam--Hyers and Ulam--Hyers--Rassias.

This paper is organized as follows. In section 2 we present the space of the weighted functions and their respective norm. In addition, we present the concepts of $\Psi$-Riemann-Liouville fractional integral, $\Psi$-Hilfer fractional derivative,  the fundamental results in this regard and the Gronwall theorems. The stability concepts of Ulam--Hyers, Ulam--Hyers--Rassias and some important observations conclude the section. In section 3, we investigated the existence, uniqueness and Ulam--Hyers stability of the fractional differential equation Eq.(\ref{sd}) on the compact interval $\Delta=[a,b]$. Section 4 is intended to investigate the uniqueness and Ulam-Hyers-Rassias stability on the infinite interval $I=[a,\infty)$. In section 5, as an application of the results we obtained, we will discuss existence and Ulam--Hyers stability of solution for  fractional Cauchy problem involving the Hadamard derivative. The concluding remarks close the paper.

\section{Preliminaries}
In this section, we introduce the space of weighted functions and their respective norm. We introduce the concepts of $\Psi$-Riemann-Liouville fractional integral, $\Psi$-Hilfer fractional derivative and the fixed-point theorem of Banach. In addition, we present the generalized Gronwall theorems to investigate the stabilities of Ulam--Hyers and Ulam--Hyers--Rassias. Some important remarks close the section.

Let $\Delta=[a,b]$ or  $I=[a,\infty )$ and $C(\Delta,\mathbb{R})$ the space of continuous functions with norm \cite{ZE1,est1}
\begin{equation}
\left\Vert x\right\Vert =\underset{t\in \Delta}{\sup }\left\vert x\left( t\right)
\right\vert .
\end{equation}

The weighted space $C_{1-\xi ,\Psi }(I,\mathbb{R})$ of functions $x$ is defined by \cite{ZE1,est1}
\begin{equation}
C_{1-\xi ,\Psi }(\Delta,\mathbb{R})=\left\{ x\in C(I_{2},\mathbb{R}),(\Psi
(t)-\Psi (a))^{1-\xi }x(t)\in C(\Delta,\mathbb{R})\right\}, 
\end{equation}%
where $0\leq \xi \leq 1$ with norm 
\begin{equation}
\left\Vert x\right\Vert _{C_{1-\xi ,\Psi }}=\underset{t\in \Delta}{\sup }%
\left\vert (\Psi (t)-\Psi (a))^{1-\xi }x(t)\right\vert .
\end{equation}%

Obviously, the space $C_{1-\xi; \Psi }(\Delta,\mathbb{R})$ is a Banach space.
The weighted space $C_{1-\xi; \Psi }^{n}(\Delta,\mathbb{R})$ of functions $x$ is defined by  
\begin{equation}
C_{1-\xi ,\Psi }^{n}(\Delta,\mathbb{R})=\{x:(a,b]\rightarrow \mathbb{R};\text{ 
}x(t)\in C^{n-1}(\Delta,\mathbb{R};x^{(n)}(t)\in C_{1-\xi ,\Psi }(\Delta,\mathbb{R}%
)\},
\end{equation}%
where $0\leq \xi \leq 1$, with norm 
\begin{equation}
\left\Vert x\right\Vert _{C_{1-\xi ,\Psi }^{n}(\Delta,\mathbb{R}%
)}=\sum_{k=0}^{n-1}\left\Vert x^{(k)}\right\Vert _{C(\Delta,\mathbb{R}%
)}+\left\Vert x^{(n)}\right\Vert _{C_{1-\xi ,\Psi }(\Delta,\mathbb{R})}.
\end{equation}

Let $I_{2}=(a,b)$ $(-\infty \leq a<b\leq \infty )$ be a finite or infinite
interval of the real line $\mathbb{R}$ and let $\mu >0$. Also, let $\Psi
(t)$ be an increasing and positive monotone function on $I_{3}=(a,b]$,
having a continuous derivative $\Psi ^{\prime }(t)$ on $I_{1}$. The
left-sided fractional integral of a function $f$ with respect to function $%
\Psi $ on $[a,b]$ is defined by \cite{ZE1}
\begin{equation}
\mathbf{I}_{a^{+}}^{\mu ,\Psi }x(t)=\dfrac{1}{\Gamma (\mu )}\int_{a}^{t}\mathbf{Q}_{\Psi
}^{\mu }\left( t,s\right) x(s)\,{\mbox{d}}s,
\end{equation}
where $\mathbf{Q}_{\Psi }^{\mu }\left( t,s\right) :=\Psi ^{\prime }\left( s\right)\left( \Psi \left( t\right) -\Psi \left( s\right) \right) ^{\mu -1}.$ The right-sided fractional integral is defined in an analogous form.

On the other hand, let $n-1 < \mu \leq n$ with $n \in \mathbb{N}$, $%
\Delta=[a,b]$ an interval such that $(-\infty \leq a < b \leq \infty)$ and let $%
f, \Psi \in C^n(\Delta,\mathbb{R})$ be two functions such that $\Psi$ is
increasing and $\Psi^{\prime }(t) \neq 0$, for all $t \in \Delta$. The left-sided 
$\Psi$-Hilfer fractional derivative ${}^{\mathbf{H}} {\mathbf{D}}_{a^{+}}^{\mu,%
\eta,\Psi}(\cdot)$ of a function $f$, of order $\mu$ $(0 < \mu \leq 1)
$ and type $\eta$ $(0 \leq \eta \leq 1)$, is defined by \cite{ZE1}
\begin{equation*}
{}^{\mathbf{H}} {\mathbf{D}}_{a^{+}}^{\mu,\eta,\Psi} x(t)=
\mathbf{I}_{a^{+}}^{\eta(n-\mu),\Psi} \left(\frac{1}{\Psi^{\prime }(t)} \frac{{%
\mbox{d}}}{{\mbox{d}}t} \right)^n \mathbf{I}_{a{+}}^{(1-\eta)(n-\mu),\Psi}
x(t). 
\end{equation*}

The right-sided $\Psi$-Hilfer fractional derivative is defined in an
analogous form.

\begin{teorema}{\rm \cite{ZE1}}
\label{T2} If $x\in C_{1-\xi ,\Psi }^{1}(\Delta,\mathbb{R}),\,\,0<\mu \leq 1
$ and $0\leq \eta \leq 1$, then 
\begin{equation*}
\mathbf{I}_{a^{+}}^{\mu ,\Psi }{}^{\mathbf{H}}{\mathbf{D}}_{a^{+}}^{\mu ,\eta ,\Psi
}x(t)=x(t)-\mathbf{M}^{\Psi}_{\xi}(t,a)\mathbf{I}_{a^{+}}^{(1-\eta )(1-\mu ),\Psi }x(a),
\end{equation*}
where $\mathbf{M}^{\Psi}_{\xi}(t,a):= \dfrac{(\Psi (t)-\Psi (a))^{\xi -1}}{\Gamma (\xi )}$.
\end{teorema}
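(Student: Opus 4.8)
The plan is to reduce the composition $\mathbf{I}_{a^{+}}^{\mu ,\Psi }{}^{\mathbf{H}}{\mathbf{D}}_{a^{+}}^{\mu ,\eta ,\Psi}x$ to the single pairing of a $\Psi$-Riemann--Liouville derivative with its fractional integral of the \emph{same} order, and then to invoke the known inversion formula for that pair. Throughout I write $\xi=\mu+\eta(1-\mu)$, so that the two exponents occurring in the definition of the $\Psi$-Hilfer derivative obey the bookkeeping identities $\eta(1-\mu)=\xi-\mu$ and $(1-\eta)(1-\mu)=1-\xi$. These are the only algebraic relations the argument needs, and getting them straight at the outset is exactly what makes the later cancellation work.

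First I would recall the definition with $n=1$ (admissible since $0<\mu\le1$) and read off that the innermost block $\frac{1}{\Psi^{\prime}(t)}\frac{d}{dt}\mathbf{I}_{a^{+}}^{(1-\eta)(1-\mu),\Psi}x(t)=\frac{1}{\Psi^{\prime}(t)}\frac{d}{dt}\mathbf{I}_{a^{+}}^{1-\xi,\Psi}x(t)$ is precisely the $\Psi$-Riemann--Liouville derivative of order $\xi$, which I abbreviate ${}^{RL}\mathbf{D}_{a^{+}}^{\xi,\Psi}x$. Hence ${}^{\mathbf{H}}{\mathbf{D}}_{a^{+}}^{\mu ,\eta ,\Psi}x=\mathbf{I}_{a^{+}}^{\xi-\mu,\Psi}\,{}^{RL}\mathbf{D}_{a^{+}}^{\xi,\Psi}x$. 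Applying $\mathbf{I}_{a^{+}}^{\mu,\Psi}$ on the left and collapsing the two fractional integrals by the semigroup law $\mathbf{I}_{a^{+}}^{\mu,\Psi}\mathbf{I}_{a^{+}}^{\xi-\mu,\Psi}=\mathbf{I}_{a^{+}}^{\xi,\Psi}$ (valid since $\mu>0$ and $\xi-\mu=\eta(1-\mu)\ge0$, with $\mathbf{I}^{0}$ read as the identity in the degenerate cases $\eta=0$ or $\mu=1$) leaves $\mathbf{I}_{a^{+}}^{\xi,\Psi}\,{}^{RL}\mathbf{D}_{a^{+}}^{\xi,\Psi}x$: an integral and a derivative of the same order $\xi$.

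It then remains to invoke the $\Psi$-Riemann--Liouville inversion identity, which for $0<\xi\le1$ and $x\in C_{1-\xi,\Psi}^{1}(\Delta,\mathbb{R})$ reads $\mathbf{I}_{a^{+}}^{\xi,\Psi}\,{}^{RL}\mathbf{D}_{a^{+}}^{\xi,\Psi}x(t)=x(t)-\frac{(\Psi(t)-\Psi(a))^{\xi-1}}{\Gamma(\xi)}\big(\mathbf{I}_{a^{+}}^{1-\xi,\Psi}x\big)(a)$. Substituting back $1-\xi=(1-\eta)(1-\mu)$ and recognizing the prefactor as $\mathbf{M}^{\Psi}_{\xi}(t,a)$ yields exactly the asserted formula. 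As a sanity check, at $\mu=1$ one has $\xi=1$, the prefactor becomes $1$ and $\mathbf{I}^{0}x(a)=x(a)$, so the identity collapses to $x(t)-x(a)$, the fundamental theorem of calculus for the operator $\frac{1}{\Psi^{\prime}}\frac{d}{dt}$.

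The main obstacle is the inversion identity just quoted: it cannot be produced by a naive integration by parts, because the kernel $(\Psi(t)-\Psi(s))^{\xi-1}$ is singular at $s=t$ when $\xi<1$, so the boundary term created by differentiating the kernel diverges. The rigorous route is to put $G=\mathbf{I}_{a^{+}}^{1-\xi,\Psi}x$, write the composition as $\mathbf{I}_{a^{+}}^{\xi,\Psi}\big(\frac{1}{\Psi^{\prime}}G^{\prime}\big)$, integrate by parts on the truncated interval $[a,t-\varepsilon]$, and pass to the limit $\varepsilon\to0^{+}$, using that $G$ is absolutely continuous (guaranteed by the hypothesis $x\in C_{1-\xi,\Psi}^{1}$) and that the only surviving boundary contribution carries the finite value $G(a)=\big(\mathbf{I}_{a^{+}}^{1-\xi,\Psi}x\big)(a)$; equivalently one simply cites the established $\Psi$-Riemann--Liouville result of \cite{ZE1}. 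I would make that citation explicit and present the semigroup reduction together with the exponent bookkeeping as the self-contained part of the proof, since that is precisely where the $\Psi$-Hilfer structure (as opposed to the $\Psi$-Riemann--Liouville structure) enters.
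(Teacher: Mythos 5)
Your argument is correct and coincides with the proof of this result in the cited source \cite{ZE1} (the paper itself imports the theorem without proof): there as well, the $\Psi$-Hilfer derivative is written as $\mathbf{I}_{a^{+}}^{\eta(1-\mu),\Psi}$ applied to the $\Psi$-Riemann--Liouville derivative of order $\xi=\mu+\eta(1-\mu)$, the semigroup property collapses $\mathbf{I}_{a^{+}}^{\mu,\Psi}\mathbf{I}_{a^{+}}^{\eta(1-\mu),\Psi}$ into $\mathbf{I}_{a^{+}}^{\xi,\Psi}$, and the $\Psi$-Riemann--Liouville inversion formula of the same order finishes the computation. Your exponent bookkeeping $\eta(1-\mu)=\xi-\mu$, $(1-\eta)(1-\mu)=1-\xi$ and your treatment of the degenerate cases $\eta=0$, $\mu=1$ are exactly right, so there is nothing to add.
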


\begin{teorema}{\rm \cite{ZE1}}
\label{T3} Let $x\in C_{1-\xi ,\Psi }^{1}(\Delta,\mathbb{R}),\mu >0$ and $%
0\leq \eta \leq 1$, then we have 
\begin{equation*}
{\mathbf{D}}_{a^{+}}^{\mu ,\eta ,\Psi }\mathbf{I}_{a^{+}}^{(1-\eta )(1-\mu
),\Psi }x(t)=x(t).
\end{equation*}
\end{teorema}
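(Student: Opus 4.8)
The plan is to prove that $\mathbf{D}_{a^+}^{\mu,\eta,\Psi}$ acts as a left inverse of the $\Psi$-Riemann--Liouville integral $\mathbf{I}_{a^+}^{(1-\eta)(1-\mu),\Psi}$, in exact analogy with the classical identity ``derivative of an integral of matching order equals the identity''. Set $\alpha:=(1-\eta)(1-\mu)$, so that $1-\alpha=\mu+\eta(1-\mu)=\xi$. First I would record the definition of $\mathbf{D}_{a^+}^{\mu,\eta,\Psi}$ as the $\Psi$-Riemann--Liouville derivative of order $\alpha$, namely $\mathbf{D}_{a^+}^{\mu,\eta,\Psi}(\cdot)=\left(\frac{1}{\Psi'(t)}\frac{d}{dt}\right)\mathbf{I}_{a^+}^{1-\alpha,\Psi}(\cdot)=\left(\frac{1}{\Psi'(t)}\frac{d}{dt}\right)\mathbf{I}_{a^+}^{\xi,\Psi}(\cdot)$, and then substitute $\mathbf{I}_{a^+}^{\alpha,\Psi}x$ for its argument.

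The key computational step is the semigroup (index) law for the $\Psi$-fractional integral,
\[
\mathbf{I}_{a^+}^{\xi,\Psi}\mathbf{I}_{a^+}^{\alpha,\Psi}x(t)=\mathbf{I}_{a^+}^{\xi+\alpha,\Psi}x(t)=\mathbf{I}_{a^+}^{1,\Psi}x(t),
\]
since $\xi+\alpha=1$; this collapses the composition of two $\Psi$-integrals into a single integral of integer order one. I would establish this law directly from the kernel $\mathbf{Q}_{\Psi}^{\mu}$ by interchanging the order of integration (Fubini) and evaluating the inner integral $\int_{s}^{t}(\Psi(t)-\Psi(\tau))^{\xi-1}(\Psi(\tau)-\Psi(s))^{\alpha-1}\Psi'(\tau)\,d\tau=\mathrm{B}(\xi,\alpha)\,(\Psi(t)-\Psi(s))^{\xi+\alpha-1}$ via the substitution $u=\frac{\Psi(\tau)-\Psi(s)}{\Psi(t)-\Psi(s)}$, together with $\mathrm{B}(\xi,\alpha)/(\Gamma(\xi)\Gamma(\alpha))=1/\Gamma(\xi+\alpha)$.

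It then remains to apply the $\Psi$-fundamental theorem of calculus. Since $\mathbf{I}_{a^+}^{1,\Psi}x(t)=\int_{a}^{t}\Psi'(s)x(s)\,ds$, the operator $\frac{1}{\Psi'(t)}\frac{d}{dt}$ returns $\frac{1}{\Psi'(t)}\,\Psi'(t)\,x(t)=x(t)$, which is precisely the claimed identity $\mathbf{D}_{a^+}^{\mu,\eta,\Psi}\mathbf{I}_{a^+}^{(1-\eta)(1-\mu),\Psi}x(t)=x(t)$.

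The main obstacle I anticipate is not the algebra but the regularity bookkeeping at the left endpoint. For $x\in C_{1-\xi,\Psi}^{1}(\Delta,\mathbb{R})$ the weight $(\Psi(t)-\Psi(a))^{\xi-1}$ allows $x$ to be unbounded as $t\to a^{+}$, so I must verify that $\mathbf{I}_{a^+}^{\alpha,\Psi}x$ and $\mathbf{I}_{a^+}^{1,\Psi}x$ are absolutely continuous (so that differentiation under the integral sign and the fundamental theorem of calculus are legitimate), and, crucially, that no boundary term of the form $\mathbf{M}^{\Psi}_{\xi}(t,a)[\cdots](a)$ survives --- in contrast to Theorem \ref{T2}, where exactly such a term is present. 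Controlling the endpoint singularity through the membership $x\in C_{1-\xi,\Psi}^{1}(\Delta,\mathbb{R})$ and confirming that this boundary contribution vanishes is the delicate point; once it is settled, the semigroup law and the first-order fundamental theorem finish the proof.
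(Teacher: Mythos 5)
First, a point of reference: the paper itself contains no proof of Theorem \ref{T3} --- it is quoted as a preliminary from \cite{ZE1} --- so your attempt can only be judged against the statement's intended meaning and its proof there. Judged that way, there is a genuine gap, and it occurs at your very first step: you ``record the definition'' of $\mathbf{D}_{a^+}^{\mu,\eta,\Psi}$ as the $\Psi$-Riemann--Liouville derivative of order $\alpha=(1-\eta)(1-\mu)$, that is, $\left(\frac{1}{\Psi'(t)}\frac{d}{dt}\right)\mathbf{I}_{a^+}^{\xi,\Psi}$. That is not the paper's definition. The only fractional derivative defined in the paper (and the one this theorem concerns; the missing ${}^{\mathbf{H}}$ is a typo) is the $\Psi$-Hilfer derivative, which for $n=1$ reads ${}^{\mathbf{H}}\mathbf{D}_{a^+}^{\mu,\eta,\Psi}=\mathbf{I}_{a^+}^{\eta(1-\mu),\Psi}\left(\frac{1}{\Psi'(t)}\frac{d}{dt}\right)\mathbf{I}_{a^+}^{(1-\eta)(1-\mu),\Psi}$; it carries the type parameter $\eta$, which your operator simply discards. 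Under the paper's actual operator, the printed formula with inner order $(1-\eta)(1-\mu)$ is \emph{false}: take $\eta=1$, so that $\mathbf{I}_{a^+}^{(1-\eta)(1-\mu),\Psi}$ is the identity, and the claim degenerates to ``the $\Psi$-Caputo derivative of $x$ equals $x$''. The typo in the printed statement is in the order of the integral, which should be $\mu$: the intended theorem, as stated in \cite{ZE1} and as actually invoked in the proof of Theorem \ref{T0} of this paper, is ${}^{\mathbf{H}}\mathbf{D}_{a^+}^{\mu,\eta,\Psi}\mathbf{I}_{a^+}^{\mu,\Psi}x(t)=x(t)$. Your reading is, to be fair, the only one that makes the printed formula literally true, but it does so by proving a statement about a different operator, one that cannot serve the purpose the theorem serves in the paper.

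What your computation does establish is the (true) Riemann--Liouville identity $\left(\frac{1}{\Psi'(t)}\frac{d}{dt}\right)\mathbf{I}_{a^+}^{\xi,\Psi}\mathbf{I}_{a^+}^{1-\xi,\Psi}x=x$; the Fubini/Beta-function derivation of the semigroup law and the concluding fundamental-theorem step are correct as far as they go. But the real content of the intended theorem is how the Hilfer derivative's \emph{outer} integral $\mathbf{I}_{a^+}^{\eta(1-\mu),\Psi}$ is disposed of, and your argument never touches it. Writing $\sigma:=\eta(1-\mu)$, the semigroup law gives ${}^{\mathbf{H}}\mathbf{D}_{a^+}^{\mu,\eta,\Psi}\mathbf{I}_{a^+}^{\mu,\Psi}x=\mathbf{I}_{a^+}^{\sigma,\Psi}\left(\frac{1}{\Psi'(t)}\frac{d}{dt}\right)\mathbf{I}_{a^+}^{1-\sigma,\Psi}x$, i.e.\ $\mathbf{I}_{a^+}^{\sigma,\Psi}$ applied to the $\Psi$-Riemann--Liouville derivative of order $\sigma$ of $x$; one must then invoke the composition formula $\mathbf{I}_{a^+}^{\sigma,\Psi}\left(\frac{1}{\Psi'(t)}\frac{d}{dt}\right)\mathbf{I}_{a^+}^{1-\sigma,\Psi}x(t)=x(t)-\frac{(\Psi(t)-\Psi(a))^{\sigma-1}}{\Gamma(\sigma)}\,\mathbf{I}_{a^+}^{1-\sigma,\Psi}x(a)$ and verify that the boundary term vanishes, which it does because for $x\in C_{1-\xi,\Psi}^{1}(\Delta,\mathbb{R})$ one has $\mathbf{I}_{a^+}^{1-\sigma,\Psi}x(t)=O\left((\Psi(t)-\Psi(a))^{\xi-\sigma}\right)$ with $\xi-\sigma=\mu>0$. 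You correctly anticipated that ``no boundary term survives'' is the delicate point, but by replacing the operator you never had to carry out precisely this step --- the one place where the hypothesis $x\in C_{1-\xi,\Psi}^{1}(\Delta,\mathbb{R})$ and the type parameter $\eta$ genuinely enter.
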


\begin{teorema}\label{wul}{\rm \cite{est4}} Let $\left( X,d\right) $ be a generalized complete metric space. Assume that $\Omega :X\rightarrow X$ is a strictly contractive operator with the Lipschitz constant $L<1.$ If there exists a nonnegative integer $k$ such that $d\left( \Omega ^{k+1},\Omega ^{k}\right) <\infty $ for some \ $x\in X\, $, then the following are true:
\begin{enumerate}
\item The sequence $\left\{ \Omega ^{k}x\right\} $ converges to a point $x^{\ast }$ of $\Omega$;

\item $x^{\ast }$ is the unique fixed point of $\Omega $ in $\Omega ^{\ast }=\left\{ y\in X/d\left( \Omega ^{k}x,y\right) <\infty \right\}$;

\item If $y\in X^{\ast }$, then $d\left( y,x^{\ast }\right) \leq \dfrac{1}{1-L}d\left( \Omega y,y\right)$.
\end{enumerate}
\end{teorema}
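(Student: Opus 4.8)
The plan is to recognize this as the Diaz--Margolis alternative fixed point theorem and to carry out the standard argument, with the one crucial caveat that in a generalized metric space the distance may equal $+\infty$, so every application of the contraction estimate must be preceded by a check that the distances involved are finite. Here I read the set appearing in claim (2) as $X^* = \{y \in X : d(\Omega^k x, y) < \infty\}$ and the limit in claim (1) as a fixed point. Writing $x_n = \Omega^n x$, I would first use strict contractivity to get $d(x_{n+1}, x_n) \le L^{\,n-k}\, d(x_{k+1}, x_k)$ for all $n \ge k$; since $d(x_{k+1}, x_k) < \infty$ by hypothesis and $L < 1$, summing the resulting geometric bound through the triangle inequality yields, for $m > n \ge k$, the estimate $d(x_m, x_n) \le \frac{L^{\,n-k}}{1-L}\, d(x_{k+1}, x_k)$. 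Letting $n \to \infty$ shows $\{x_n\}$ is Cauchy, and completeness of $(X,d)$ produces a limit $x^*$; continuity of the contraction $\Omega$ then gives $\Omega x^* = \lim \Omega x_n = \lim x_{n+1} = x^*$, so $x^*$ is a fixed point. This settles claim (1).

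For claim (2), I would first check that $x^* \in X^*$: passing to the limit $m \to \infty$ in the Cauchy estimate with $n = k$ gives $d(x_k, x^*) \le \frac{1}{1-L}\, d(x_{k+1}, x_k) < \infty$. For uniqueness, suppose $y$ is another fixed point lying in $X^*$. Since both $x^*$ and $y$ belong to $X^*$, the triangle inequality through $\Omega^k x$ guarantees $d(x^*, y) < \infty$, and only now may I safely write $d(x^*, y) = d(\Omega x^*, \Omega y) \le L\, d(x^*, y)$; finiteness together with $L < 1$ forces $d(x^*, y) = 0$, that is, $y = x^*$.

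For the error estimate (3), given $y \in X^*$ I would combine the triangle inequality with the fixed point property of $x^*$ to obtain $d(y, x^*) \le d(y, \Omega y) + d(\Omega y, \Omega x^*) \le d(\Omega y, y) + L\, d(y, x^*)$. As before, $y, x^* \in X^*$ ensures $d(y, x^*) < \infty$, so I may transpose the term $L\, d(y, x^*)$ and divide by $1 - L$ to arrive at $d(y, x^*) \le \frac{1}{1-L}\, d(\Omega y, y)$.

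The only genuine obstacle is the bookkeeping of infinite distances: the contraction inequality $d(u,v) \le L\, d(u,v)$ is vacuous when $d(u,v) = +\infty$, so the restriction to the set $X^*$ is exactly what keeps the relevant distances finite and makes each division by $1 - L$ legitimate. Once finiteness is secured at every step, the remainder of the argument is entirely routine.
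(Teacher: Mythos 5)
The paper never proves this statement: Theorem \ref{wul} is quoted, with a citation, from the literature --- it is the Diaz--Margolis fixed point alternative --- so there is no in-paper argument to compare yours against. Judged on its own, your proof is correct and complete, and it is the standard argument for this theorem. The geometric estimate $d(\Omega^m x,\Omega^n x)\le \frac{L^{\,n-k}}{1-L}\,d(\Omega^{k+1}x,\Omega^k x)$ for $m>n\ge k$ gives the Cauchy property, completeness of the generalized metric space produces the limit $x^{\ast}$, and the contraction inequality transfers the fixed-point property to the limit. Your bookkeeping of infinite distances is exactly the point that distinguishes this setting from the classical Banach theorem: the inequality $d(x^{\ast},y)\le L\,d(x^{\ast},y)$ forces $d(x^{\ast},y)=0$ only when that distance is already known to be finite, and membership of both points in $X^{\ast}=\{y\in X: d(\Omega^k x,y)<\infty\}$ is precisely what supplies that finiteness, both in the uniqueness claim (2) and in the error bound (3). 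You also correctly repaired the statement's typographical slips (reading $d(\Omega^{k+1},\Omega^k)$ as $d(\Omega^{k+1}x,\Omega^k x)$, and identifying $\Omega^{\ast}$ with $X^{\ast}$). The only step worth spelling out more fully is the verification that $d(\Omega^k x,x^{\ast})<\infty$: ``passing to the limit'' in the Cauchy estimate tacitly uses $d(x_k,x^{\ast})\le d(x_k,x_m)+d(x_m,x^{\ast})$ together with $d(x_m,x^{\ast})\to 0$, since a generalized metric need not be continuous in its arguments a priori; written that way the step is airtight.
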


\begin{teorema}{\rm \cite{gron}}
\label{gronwalltheorem}  \textsc{(Gronwall theorem).} Let $u,v$ be two integrable functions and $g$ a continuous function, it domain $[a,b]$. Let $ \Psi \in C^1(\Delta,\mathbb{R})$  an increasing function such that $\Psi^{\prime }(t) \neq 0$, $\forall t \in \Delta$. Assume that:  
\begin{enumerate}
\item $u$ and $v$ are nonnegative;
\item $y$ is nonnegative and nondecreasing.
\end{enumerate}

If 
\begin{equation*}
u(t)\leq v(t)+g(t)\int_{a}^{b}\mathbf{Q}_{\Psi }^{\mu }\left( t,s\right) u(s)\,{%
\mbox{d}}s,
\end{equation*}%
then 
\begin{equation*}
u(t)\leq v(t)+\int_{a}^{b}\sum_{k=1}^{\infty }\frac{[g(t)\xi (\mu
)]^{k}}{\Gamma (\mu k)}\mathbf{Q}_{\Psi }^{k\mu }\left( t,s\right) v(s)\,{%
\mbox{d}}s,
\end{equation*}%
$\forall t\in \Delta$ and $\mathbf{Q}_{\Psi }^{k\mu }\left( t,s\right) :=\Psi ^{\prime }\left( s\right) \left( \Psi \left( t\right) -\Psi \left( s\right) \right) ^{k\mu -1}.$
\end{teorema}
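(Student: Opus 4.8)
The plan is to run the classical iteration argument for generalized Gronwall inequalities, adapted here to the $\Psi$-kernel $\mathbf{Q}_{\Psi}^{\mu}$. First I would introduce the Volterra-type (causal) operator, reading the integral with variable upper limit $t$,
\[
(B\phi)(t) := g(t)\int_{a}^{t}\mathbf{Q}_{\Psi}^{\mu}(t,s)\,\phi(s)\,\mathrm{d}s,
\]
so that the hypothesis becomes compactly $u(t)\le v(t)+(Bu)(t)$. The crucial structural observation is that $B$ is \emph{monotone}: since $g$ is nonnegative and the kernel $\mathbf{Q}_{\Psi}^{\mu}(t,s)=\Psi'(s)(\Psi(t)-\Psi(s))^{\mu-1}\ge 0$ (using that $\Psi$ is increasing with $\Psi'\neq 0$), we have $\phi\le\psi\Rightarrow B\phi\le B\psi$. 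Substituting $u\le v+Bu$ into itself and using monotonicity, a straightforward induction yields the partial-sum estimate
\[
u(t)\le \sum_{k=0}^{n-1}(B^{k}v)(t)+(B^{n}u)(t),\qquad n\ge 1.
\]

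The technical heart of the argument is to compute the iterated kernel. I would prove by induction on $n$ that
\[
(B^{n}\phi)(t)\le \frac{[g(t)\,\Gamma(\mu)]^{n}}{\Gamma(n\mu)}\int_{a}^{t}\mathbf{Q}_{\Psi}^{n\mu}(t,s)\,\phi(s)\,\mathrm{d}s .
\]
The inductive step reduces to the semigroup identity for the $\Psi$-kernel,
\[
\int_{s}^{t}\Psi'(\tau)\,(\Psi(t)-\Psi(\tau))^{n\mu-1}(\Psi(\tau)-\Psi(s))^{\mu-1}\,\mathrm{d}\tau=\frac{\Gamma(n\mu)\,\Gamma(\mu)}{\Gamma((n+1)\mu)}\,(\Psi(t)-\Psi(s))^{(n+1)\mu-1},
\]
which follows from the change of variable $\Psi(\tau)=\Psi(s)+(\Psi(t)-\Psi(s))\,r$ and the Beta integral $\int_{0}^{1}(1-r)^{n\mu-1}r^{\mu-1}\,\mathrm{d}r=\Gamma(\mu)\Gamma(n\mu)/\Gamma((n+1)\mu)$. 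Here the hypothesis that $g$ is nondecreasing is used to replace $g(\tau)$ by $g(t)$ for $\tau\le t$ and thereby factor out $[g(t)]^{n}$; this identifies the constant appearing in the statement as $\xi(\mu)=\Gamma(\mu)$.

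It remains to pass to the limit $n\to\infty$. Since $u$ is integrable and $g$ is continuous, hence bounded by some $M$ on the compact $[a,b]$, the remainder satisfies $(B^{n}u)(t)\le \tfrac{[M\Gamma(\mu)]^{n}}{\Gamma(n\mu)}\int_{a}^{t}\mathbf{Q}_{\Psi}^{n\mu}(t,s)u(s)\,\mathrm{d}s$, and because $\Gamma(n\mu)$ eventually dominates any geometric factor the prefactor tends to $0$; thus $(B^{n}u)(t)\to 0$. Letting $n\to\infty$ in the partial-sum estimate and invoking monotone convergence to interchange the (now convergent) series with the integral gives
\[
u(t)\le v(t)+\int_{a}^{t}\sum_{k=1}^{\infty}\frac{[g(t)\,\Gamma(\mu)]^{k}}{\Gamma(k\mu)}\,\mathbf{Q}_{\Psi}^{k\mu}(t,s)\,v(s)\,\mathrm{d}s,
\]
which is the asserted conclusion. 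I expect the main obstacle to be the Beta-function induction for $B^{n}$ together with the twin justifications that $(B^{n}u)(t)\to 0$ and that the summation and integration may be exchanged; the positivity of $u,v,g$ and the monotonicity of $g$ are precisely what make every inequality in the chain propagate in the correct direction.
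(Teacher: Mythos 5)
Your proof is correct and takes essentially the same route as the proof in the cited source \cite{gron} (the paper itself only quotes the theorem): a monotone Volterra operator $B$, the induction $u\le\sum_{k=0}^{n-1}B^{k}v+B^{n}u$, the Beta-integral computation of the iterated kernel yielding the constant $\Gamma(\mu)$, the vanishing of the remainder via the growth of $\Gamma(n\mu)$, and monotone convergence to sum the series. You also correctly repaired the statement's misprints --- the integrals should run from $a$ to $t$, the nonnegative nondecreasing function is $g$ (not $y$), and $\xi(\mu)$ is $\Gamma(\mu)$ --- exactly as the original result reads.
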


\begin{lema} {\rm\cite{gron}}
\label{L1} \textsc{(Gronwall lemma)} Under the hypotheses of {\rm Theorem \ref{gronwalltheorem}}, let $v$ be a non-decreasing function on $\Delta$. Then, we have 
\begin{equation*}
u(t)\leq v(t)\mathcal{E}_{\mu }\left( g(t)\Gamma (\mu )[(\Psi (t)-\Psi
(a))^{\mu }]\right) 
\end{equation*}%
$t\in \Delta$, where $\mathcal{E}_{\mu }(\cdot )$ is the Mittag-Leffler function with one parameter.
\end{lema}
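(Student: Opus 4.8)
The plan is to derive the lemma directly from the conclusion of Theorem~\ref{gronwalltheorem} by exploiting the monotonicity of $v$ and then recognising the resulting series as a Mittag--Leffler function. First I would start from the estimate furnished by Theorem~\ref{gronwalltheorem},
\[
u(t)\leq v(t)+\int_{a}^{t}\sum_{k=1}^{\infty }\frac{[g(t)\Gamma (\mu )]^{k}}{\Gamma (\mu k)}\,\Psi ^{\prime }(s)\left( \Psi (t)-\Psi (s)\right) ^{k\mu -1}v(s)\,\mathrm{d}s,
\]
and use that $v$ is non-decreasing, so that $v(s)\le v(t)$ for every $s\le t$; this lets me pull the factor $v(t)$ out of the integral. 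Since $g(t)$ does not depend on the integration variable $s$, the only $s$-dependence remaining inside the integral sits in the kernel $\Psi ^{\prime }(s)(\Psi (t)-\Psi (s))^{k\mu -1}$.

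Next I would interchange the summation and the integration---legitimate because the series converges uniformly on $\Delta$, its terms being dominated by those of the entire Mittag--Leffler series---and evaluate each term by the substitution $w=\Psi (s)$, $\mathrm{d}w=\Psi ^{\prime }(s)\,\mathrm{d}s$, which yields
\[
\int_{a}^{t}\Psi ^{\prime }(s)\left( \Psi (t)-\Psi (s)\right) ^{k\mu -1}\,\mathrm{d}s=\frac{\left( \Psi (t)-\Psi (a)\right) ^{k\mu }}{k\mu }.
\]
The key algebraic step is then the Gamma recursion $k\mu \,\Gamma (k\mu )=\Gamma (k\mu +1)$, which converts the coefficient $\tfrac{1}{\Gamma (\mu k)\,k\mu }$ into $\tfrac{1}{\Gamma (k\mu +1)}$ and collapses the sum into
\[
\sum_{k=1}^{\infty }\frac{\left[ g(t)\Gamma (\mu )\left( \Psi (t)-\Psi (a)\right) ^{\mu }\right] ^{k}}{\Gamma (k\mu +1)}=\mathcal{E}_{\mu }\!\left( g(t)\Gamma (\mu )\left( \Psi (t)-\Psi (a)\right) ^{\mu }\right) -1,
\]
the $-1$ accounting for the absent $k=0$ term of the one-parameter Mittag--Leffler series. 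Adding back the leading $v(t)$ and combining the two contributions gives precisely $u(t)\le v(t)\,\mathcal{E}_{\mu }\!\left( g(t)\Gamma (\mu )(\Psi (t)-\Psi (a))^{\mu }\right)$, which is the claim.

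The step demanding the most care is the justification of the term-by-term integration, together with the implicit reading of the Volterra integral as having upper limit $t$ rather than $b$ (so that the kernel $(\Psi (t)-\Psi (s))^{k\mu -1}$ remains real and nonnegative over the range of integration). Once the monotonicity of $v$ and the uniform convergence of the dominating Mittag--Leffler series are secured, the remaining manipulations---the substitution $w=\Psi (s)$ and the Gamma-function identity---are entirely routine.
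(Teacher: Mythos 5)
Your proof is correct, and it is essentially the argument given in the source the paper cites for this lemma: the paper itself states Lemma \ref{L1} without proof, importing it from \cite{gron}, and the proof there proceeds exactly as you do — bound $v(s)\le v(t)$ by monotonicity, integrate the kernel via the substitution $w=\Psi(s)$ to get $(\Psi(t)-\Psi(a))^{k\mu}/(k\mu)$, absorb the factor $1/(k\mu)$ through $k\mu\,\Gamma(k\mu)=\Gamma(k\mu+1)$, and recognize the resulting series as $\mathcal{E}_{\mu}-1$. You also correctly repaired the two typos in the paper's statement of Theorem \ref{gronwalltheorem} (the upper limit of integration must be $t$, not $b$, and $\xi(\mu)$ should read $\Gamma(\mu)$), without which the statement does not parse. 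One small refinement: uniform convergence fails near $s=t$ for the finitely many terms with $k\mu<1$ (singular kernels), so the cleanest justification for swapping sum and integral is simply that all terms are nonnegative, hence monotone convergence (Tonelli) applies; the singularities are integrable, so nothing else is needed.
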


Let $(\mathbb{R},|\cdot|)$ be a Banach space and $\widetilde{\mathfrak{W}}:(C_{1-\xi;\Psi}\Delta,\mathbb{R}) \to (C_{1-\xi;\Psi}\Delta,\mathbb{R})$
an abstract Volterra operator.

For $f \in C_{1-\xi;\Psi} (\Delta \times \mathbb{R}^2, \mathbb{R})$, $\varepsilon
> 0$ and $\varphi \in C_{1-\xi,\Psi} (\Delta,\mathbb{R}_{+})$ we consider the following fractional Cauchy problem
\begin{eqnarray}
{}^{\mathbf{H}}{\mathbf{D}}_{a^{+}}^{\mu ,\eta ,\Psi }x(t) &=& f(t,x(t),\widetilde{\mathfrak{W}}\left( x\right) (t))  \label{1} \\
\mathbf{I}_{a^{+}}^{1-\xi ,\Psi }x(a) &=&\delta ,~ \delta \in \mathbb{R}
\label{11}
\end{eqnarray}
and the inequalities given below
\begin{equation}
\left\vert {}^{\mathbf{H}}{\mathbf{D}}_{a^{+}}^{\mu ,\eta ,\Psi }y(t)-f(t,y(t),\widetilde{\mathfrak{W}}\left( y\right) (t)\right\vert \leq \varepsilon ,\quad t\in \Delta,
\label{2}
\end{equation}%
\begin{equation}
\left\vert {}^{\mathbf{H}}{\mathbf{D}}_{a^{+}}^{\mu ,\eta ,\Psi }y(t)-f(t,y(t),%
\widetilde{\mathfrak{W}}y(t)\right\vert \leq \varphi (t),\quad t\in \Delta.  \label{3}
\end{equation}

To deal with different kinds of Ulam types stabilities of {\rm Eq.(\ref{1})} we adopt the definitions of \cite{principal}.

\begin{definition}
The {\rm Eq.(\ref{1})} is Ulam--Hyers stable if there exist a real number $c > 0$ such that for each $\varepsilon > 0$ and for each  solution $y \in
C_{1-\xi,\Psi}^1(\Delta,\mathbb{R})$ of inequality {\rm(\ref{2})} there
exist a solution $x \in C_{1-\xi,\Psi}^1(\Delta,\mathbb{R})$ of {\rm Eq.(
\ref{1})} with  
\begin{equation*}
\left\Vert y-x\right\Vert _{C_{1-\xi;\Psi }(\Delta,\mathbb{R})}  \leq c \, \varepsilon.
\end{equation*}
\end{definition}

\begin{definition}
The {\rm Eq.(\ref{1})} is generalized Ulam-Hyers-Rassias stable with respect to $\varphi$, if there exist $c_{\varphi} > 0$ such that for each solution $y \in C_{1-\xi,\Psi}^1(\Delta,\mathbb{R})$ of inequality {\rm(%
\ref{3})} with  
\begin{equation*}
(\Psi (t)-\Psi (a))^{1-\xi }| y(t) - x(t) | \leq c_{\varphi} \varphi(t), \, t \in I.  
\end{equation*}
\end{definition}

\begin{remark}
A function $y \in C_{1-\xi,\Psi}^1 (\Delta,\mathbb{R})$ satisfies inequality {\rm (\ref{2})} if and only if there exists a function $g \in C_{1-\xi,\Psi}(\Delta,\mathbb{R})$ such that
\begin{enumerate}
\item $\left\vert g(t)\right\vert \leq \varepsilon ,\quad t\in \Delta$;

\item ${}^{\mathbf{H}}{\mathbf{D}}_{a^{+}}^{\mu ,\eta ,\Psi }y(t)=f\left( t,y(t),
\widetilde{\mathfrak{W}}\left( y\right) (t)\right) +g(t),\quad t\in \Delta$.
\end{enumerate}

\end{remark}

\begin{remark}
A function $y \in C_{1-\xi,\Psi}^1 (\Delta,\mathbb{R})$ satisfies inequality {\rm (\ref{3})} if and only if there exists a function $\widetilde{g} \in C(\Delta,\mathbb{R})$ {\rm (which depends on $y$)} such that
	
\begin{enumerate}
\item  $\left\vert \widetilde{g}(t)\right\vert \leq \varphi (t),\quad t\in \Delta$;

\item $^{\mathbf{H}}{\mathbf{D}}_{a^{+}}^{\mu ,\eta ,\Psi }y(t)=f\left( t,y(t), \widetilde{\mathfrak{W}}\left( y\right) (t)\right) +\widetilde{g}(t),\quad t\in \Delta$.
\end{enumerate}
\end{remark}

\begin{remark}\label{R5}
If $y \in C_{1-\xi,\Psi}^1 (\Delta,\mathbb{R})$ satisfies inequality {\rm (\ref{2})}, then $y$ is a solution of the following integral equation
\begin{eqnarray*}
&&\left\vert y(t)-y(a)\mathbf{M}^{\Psi}_{\xi}(t,a){\Gamma(\xi )%
}-\frac{1}{\Gamma (\mu )}\int_{a}^{t}\mathbf{Q}_{\Psi }^{\mu }\left( t,s\right)
f(s,y(s),\widetilde{\mathfrak{W}}\left( y\right)(s)\,{\mbox{d}}s\right\vert   \notag \\
&\leq &\frac{(\Psi (t)-\Psi (a))^{\mu }}{\Gamma (\mu +1)}\varepsilon
,\quad t\in \Delta.
\end{eqnarray*}
\end{remark}

\begin{remark} \label{R55}
If $y \in C_{1-\xi,\Psi}^1 (\Delta,\mathbb{R})$ satisfies inequality {\rm (\ref{3})}, then $y$ is a solution of the following integral equation
\begin{eqnarray*}
&&\left\vert y(t)-y(a)\mathbf{M}^{\Psi}_{\xi}(t,a)-\frac{1}{\Gamma (\mu )}\int_{a}^{t}\mathbf{Q}_{\Psi }^{\mu }\left( t,s\right)f(s,y(s),\widetilde{\mathfrak{W}}\left( y\right) (s)\,{\mbox{d}}s\right\vert   \notag \\
&\leq &\frac{1}{\Gamma (\mu )}\int_{a}^{b}\mathbf{Q}_{\Psi }^{\mu }\left( t,s\right) \varphi (s)\,{\mbox{d}}s,\quad t\in \Delta.
\end{eqnarray*}
\end{remark}


\section{Ulam--Hyers stability} 

In this section, our main results investigate the existence, uniqueness and stability of Ulam--Hyers of the Cauchy fractional problem (\ref{1})--(\ref{11}) on the compact interval $\Delta=[a,b]$.

\begin{teorema}\label{T0} Consider the following;

{\rm(a)} $f \in C_{1-\xi,\Psi}(\Delta \times \mathbb{R}^2, \mathbb{R} ), ~\widetilde{\mathfrak{W}} \in C((C_{1-\xi,\Psi}\Delta,\mathbb{R}), (C_{1-\xi,\Psi}\Delta,\mathbb{R}) )$;

{\rm(b)} There exist $L_{f}>0$ such that 
\begin{equation*}
\left\vert f(t,u_{1},u_{2})-f(t,v_{1},v_{2})\right\vert \leq L_{f}\sum_{k=1}^{2}\left\vert u_{i}-v_{i}\right\vert ,~ t\in \lbrack a,b], 
\end{equation*}
with $u_{i},v_{i}\in \mathbb{R}$, $i=1,2$;

{\rm(c)} There exists $L_{\widetilde{\mathfrak{W}}}>0$ such that 
\begin{equation*}
\left\vert \widetilde{\mathfrak{W}}\left( x\right) (t)-\widetilde{\mathfrak{W}}\left( y\right)
(t)\right\vert \leq L_{\widetilde{\mathfrak{W}}}\left\vert x(t)-y(t)\right\vert ,\quad x,\,y\in
C_{1-\xi ,\Psi }[a,b],\,\,t\in \Delta;
\end{equation*}

{\rm (d)} The inequality $L_{f}\left\{ 1+L_{\widetilde{\mathfrak{W}}}\right\} \dfrac{\Gamma(\xi
)(\Psi (b)-\Psi (a))^{\mu }}{\Gamma (\xi +\mu )}<1$ is true.

Then, we have

{\rm (i)} The fractional problem {\rm Eq.(\ref{1})} and {\rm Eq.(\ref{11})} has a unique solution in $C_{1-\xi,\Psi}(\Delta,\mathbb{R})$;

{ \rm (ii)} The solution of {\rm Eq.(\ref{1})} is Ulam--Hyers stable.
\end{teorema}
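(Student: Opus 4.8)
The plan is to recast the Cauchy problem \eqref{1}--\eqref{11} as a fixed point equation and apply the Banach contraction principle of Theorem \ref{T2} for part (i), then derive an integral inequality and close it with the generalized Gronwall lemma (Lemma \ref{L1}) for part (ii). First I would apply $\mathbf{I}_{a^{+}}^{\mu,\Psi}$ to both sides of \eqref{1}; by Theorem \ref{T2}, together with \eqref{11} and the standard $\Psi$-Hilfer relation $1-\xi=(1-\eta)(1-\mu)$ (so that $\mathbf{I}_{a^{+}}^{(1-\eta)(1-\mu),\Psi}x(a)=\delta$), this shows that $x\in C_{1-\xi,\Psi}(\Delta,\mathbb{R})$ solves \eqref{1}--\eqref{11} if and only if it is a fixed point of the operator $\mathcal{T}:C_{1-\xi,\Psi}(\Delta,\mathbb{R})\to C_{1-\xi,\Psi}(\Delta,\mathbb{R})$,
\begin{equation*}
(\mathcal{T}x)(t)=\mathbf{M}^{\Psi}_{\xi}(t,a)\,\delta+\frac{1}{\Gamma(\mu)}\int_{a}^{t}\mathbf{Q}_{\Psi}^{\mu}(t,s)\,f\bigl(s,x(s),\widetilde{\mathfrak{W}}(x)(s)\bigr)\,\mathrm{d}s .
\end{equation*}
Hypotheses (a) and (c) ensure that $\mathcal{T}$ is well defined and maps the weighted space into itself.

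Next I would estimate $\mathcal{T}x-\mathcal{T}y$ in the weighted norm. Subtracting, applying the Lipschitz bounds (b) and (c) in succession, and inserting the weight $(\Psi(s)-\Psi(a))^{\xi-1}$ so as to bring in $\|x-y\|_{C_{1-\xi,\Psi}}$, reduces the estimate to the Beta-type identity
\begin{equation*}
\int_{a}^{t}\Psi'(s)(\Psi(t)-\Psi(s))^{\mu-1}(\Psi(s)-\Psi(a))^{\xi-1}\,\mathrm{d}s=\frac{\Gamma(\mu)\Gamma(\xi)}{\Gamma(\mu+\xi)}(\Psi(t)-\Psi(a))^{\mu+\xi-1},
\end{equation*}
obtained via the substitution $u=(\Psi(s)-\Psi(a))/(\Psi(t)-\Psi(a))$. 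Multiplying by $(\Psi(t)-\Psi(a))^{1-\xi}$ and taking the supremum over $\Delta$ yields
\begin{equation*}
\left\Vert\mathcal{T}x-\mathcal{T}y\right\Vert_{C_{1-\xi,\Psi}}\leq L_{f}\bigl(1+L_{\widetilde{\mathfrak{W}}}\bigr)\frac{\Gamma(\xi)(\Psi(b)-\Psi(a))^{\mu}}{\Gamma(\xi+\mu)}\left\Vert x-y\right\Vert_{C_{1-\xi,\Psi}}.
\end{equation*}
By hypothesis (d) the constant is strictly less than $1$, so $\mathcal{T}$ is a contraction and Banach's fixed point theorem gives the unique solution, proving (i).

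For part (ii), let $y$ satisfy the inequality \eqref{2} and let $x$ be the unique solution of \eqref{1} with the same initial data, so that the initial terms of the two integral representations cancel. Remark \ref{R5} supplies the starting estimate, and subtracting the integral form of $x$ and again applying (b)--(c) gives
\begin{equation*}
\left\vert y(t)-x(t)\right\vert\leq\frac{(\Psi(t)-\Psi(a))^{\mu}}{\Gamma(\mu+1)}\varepsilon+\frac{L_{f}\bigl(1+L_{\widetilde{\mathfrak{W}}}\bigr)}{\Gamma(\mu)}\int_{a}^{t}\mathbf{Q}_{\Psi}^{\mu}(t,s)\left\vert y(s)-x(s)\right\vert\,\mathrm{d}s .
\end{equation*}
Taking $u(t)=\vert y(t)-x(t)\vert$, the non-decreasing function $v(t)=\varepsilon(\Psi(t)-\Psi(a))^{\mu}/\Gamma(\mu+1)$, and constant $g\equiv L_{f}(1+L_{\widetilde{\mathfrak{W}}})/\Gamma(\mu)$, Lemma \ref{L1} yields
\begin{equation*}
\left\vert y(t)-x(t)\right\vert\leq\frac{(\Psi(t)-\Psi(a))^{\mu}}{\Gamma(\mu+1)}\,\mathcal{E}_{\mu}\bigl(L_{f}(1+L_{\widetilde{\mathfrak{W}}})(\Psi(t)-\Psi(a))^{\mu}\bigr)\,\varepsilon .
\end{equation*}
Multiplying by $(\Psi(t)-\Psi(a))^{1-\xi}$ and passing to the supremum over $\Delta$ gives $\Vert y-x\Vert_{C_{1-\xi,\Psi}}\leq c\,\varepsilon$ with $c=\tfrac{(\Psi(b)-\Psi(a))^{1-\xi+\mu}}{\Gamma(\mu+1)}\mathcal{E}_{\mu}\bigl(L_{f}(1+L_{\widetilde{\mathfrak{W}}})(\Psi(b)-\Psi(a))^{\mu}\bigr)$, which is precisely Ulam--Hyers stability.

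The main obstacle I anticipate is not a single hard estimate but careful bookkeeping with the weighted norm: one must consistently factor out $(\Psi(s)-\Psi(a))^{\xi-1}$ before integrating and recover the sharp Beta-integral identity so that the resulting contraction constant matches condition (d) \emph{exactly}. A secondary technical point is verifying that $\mathcal{T}$ genuinely maps $C_{1-\xi,\Psi}(\Delta,\mathbb{R})$ into itself and that the initial data of $x$ and $y$ are chosen to coincide so that the $\mathbf{M}^{\Psi}_{\xi}(t,a)$ terms cancel before invoking Gronwall.
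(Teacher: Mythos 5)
Your proposal is correct and follows essentially the same route as the paper: equivalence with the weighted integral equation, the Beta-type identity $\mathbf{I}_{a^{+}}^{\mu,\Psi}(\Psi(t)-\Psi(a))^{\xi-1}=\frac{\Gamma(\xi)}{\Gamma(\mu+\xi)}(\Psi(t)-\Psi(a))^{\mu+\xi-1}$ yielding exactly the contraction constant of hypothesis (d), Banach's fixed point theorem for part (i), and Remark \ref{R5} combined with the Gronwall lemma (Lemma \ref{L1}) for part (ii), ending with the same stability constant $c$. The only slip is attributing the contraction principle to Theorem \ref{T2}, which is the composition rule for $\mathbf{I}_{a^{+}}^{\mu,\Psi}\,{}^{\mathbf{H}}\mathbf{D}_{a^{+}}^{\mu,\eta,\Psi}$; the fixed point theorem invoked in the paper is Theorem \ref{wul}.
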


\begin{proof}
{\rm(i)}. Under condition {\rm(a)}, {\rm Eq.(\ref{1})} and 
{\rm Eq.(\ref{11})} are equivalent to the integral equation 
\begin{equation}
x(t)=\mathbf{M}^{\Psi}_{\xi}(t,a)\delta +\frac{1}{\Gamma (\mu )}\int_{a}^{t}\mathbf{Q}_{\Psi }^{\mu }\left( t,s\right) f\left(s,x(s),\widetilde{\mathfrak{W}}\left( x\right) (s)\right) \,{\mbox{d}}s.
\end{equation}

In fact, applying the integral operator $\mathbf{I}_{a_{+}}^{\mu ,\Psi }(\cdot )$ on both sides of {\rm Eq.(\ref{1})}, using the relation $ \mathbf{I}_{a_{+}}^{1-\xi ,\Psi }x(a)=\delta $ and {\rm Theorem \ref{T2}}, we have 
\begin{equation*}
x(t)-\mathbf{M}^{\Psi}_{\xi}(t,a) \mathbf{I}_{a_{+}}^{(1-\eta )(1-\mu ),\Psi }x(a)=\mathbf{I}_{a{+}}^{\mu ,\Psi }f\left(t,x(t),\widetilde{\mathfrak{W}}\left( y\right) (t)\right) 
\end{equation*}
which implies that 
\begin{equation}
x(t)=\mathbf{M}^{\Psi}_{\xi}(t,a)\delta
+\mathbf{I}_{a_{+}}^{\mu ,\Psi }f\left( t,x(t),\widetilde{\mathfrak{W}}\left( y\right) (t)\right) .  \label{A}
\end{equation}

On the other hand, applying the $\Psi$-Hilfer fractional derivative ${}^{\mathbf{H}}%
\mathbf{D}_{a_{+}}^{\mu ,\eta ,\Psi }(\cdot )$ on both sides of {\rm
Eq.(\ref{A})} and using {\rm Theorem \ref{T3}}, we have 
\begin{eqnarray}
^{\mathbf{H}}\mathbf{D}_{a_{+}}^{\mu ,\eta ,\Psi }x(t) &=&\displaystyle{}^{\mathbf{H}}\mathbf{D}_{a_{+}}^{\mu ,\eta ,\Psi }\left[ \mathbf{M}^{\Psi}_{\xi}(t,a)\delta +\mathbf{I}_{a{+}}^{\mu ,\Psi }f\left( t,x(t),\widetilde{\mathfrak{W}}\left( y\right) (t)\right) \right]   \notag \\
&=&f\left( t,x(t),\widetilde{\mathfrak{W}}\left( y\right) (t)\right) 
\end{eqnarray}
where 
\begin{equation*}
{}^{\mathbf{H}}\mathbf{D}_{a_{+}}^{\mu ,\eta ,\Psi }\left[ \mathbf{M}^{\Psi}_{\xi}(t,a)\delta \right] =0
\end{equation*}%
with $0<\xi <1$.

Now, consider $X=C_{1-\xi ,\Psi }\left( \Delta,\mathbb{R}\right) $ and the operator $\mathscr{B}_{f}:X\rightarrow X$  given by 
\begin{equation*}
\mathscr{B}_{f}x(t)=\mathbf{M}^{\Psi}_{\xi}(t,a)%
\delta +\frac{1}{\Gamma (\mu )}\int_{a}^{t}\mathbf{Q}_{\Psi }^{\mu }\left(
t,s\right) f\left( s,x(s),\widetilde{\mathfrak{W}}\left( y\right) (s)\right) \,{\mbox{d}%
}s.
\end{equation*}
The main purpose here is to prove that $\mathscr{B}_{f}$ is a contraction on $X$ with respect to the norm $\left\Vert \cdot \right\Vert _{C_{1-\xi ,\Psi }}$. 

For any  $x,y\in C_{1-\xi ,\Psi }(\Delta,\mathbb{R})$ and $t\in \Delta=[a,b]$, we have 
\begin{eqnarray*}
&&\left\vert  \mathscr{B}_{f}\left( x(t)\right) -\mathscr{B}_{f}\left(
y(t)\right) \right\vert  \\
&\leq &\frac{1}{\Gamma (\mu )}\int_{a}^{t}\mathbf{Q}_{\Psi }^{\mu
}\left( t,s\right) \left\vert f(s,x(s),\widetilde{\mathfrak{W}}\left(
x\right) (s)-f(s,y(s),\widetilde{\mathfrak{W}}\left( y\right) (s)\right\vert
\,{\mbox{d}}s \\
&\leq &\frac{1}{\Gamma (\mu )}\int_{a}^{t}\mathbf{Q}_{\Psi }^{\mu
}\left( t,s\right) L_{f}\left\{ \left\vert x(s)-y(s)\right\vert +\left\vert 
\widetilde{\mathfrak{W}}\left( x\right) (s)-\widetilde{\mathfrak{W}}\left(
y\right) (s)\right\vert \right\} \,{\mbox{d}}s \\
&\leq &L_{f}\left\{ 1+L_{\widetilde{\mathfrak{W}}}\right\} \frac{1}{\Gamma
(\mu )}\int_{a}^{t}\mathbf{Q}_{\Psi }^{\mu }\left( t,s\right)
\,\left\vert x(s)-y(s)\right\vert {\mbox{d}}s \\
&\leq &L_{f}\left\{ 1+L_{\widetilde{\mathfrak{W}}}\right\} \left\Vert x-y\right\Vert _{C_{1-\xi ;\Psi }(\Delta,\mathbb{R})}\frac{1}{\Gamma (\mu )}\int_{a}^{t}\mathbf{Q}_{\Psi }^{\mu }\left( t,s\right) \left( \Psi \left( s\right)
-\Psi \left( a\right) \right) ^{\xi -1}{\mbox{d}}s \\ &=&L_{f}\left\{ 1+L_{\widetilde{\mathfrak{W}}}\right\} \left\Vert x-y\right\Vert _{C_{1-\xi ;\Psi }(\Delta,\mathbb{R})}\mathbf{I}_{a{+}}^{\mu ;\Psi }\left( \Psi \left( t\right) -\Psi \left( a\right) \right) ^{\xi -1} \\ &=&L_{f}\left\{ 1+L_{\widetilde{\mathfrak{W}}}\right\} \left\Vert
x-y\right\Vert _{C_{1-\xi ;\Psi }(\Delta,\mathbb{R})}\frac{\Gamma \left( \xi \right) \left( \Psi \left( t\right) -\Psi \left( a\right) \right) ^{\mu +\xi -1}}{\Gamma \left( \mu +\xi \right) }
\end{eqnarray*}%

Therefore, we get 
\begin{eqnarray}\label{js}
& &\left\Vert \mathscr{B}_{f}x-\mathscr{B}_{f}y\right\Vert _{C_{1-\xi };\Psi }\nonumber \\
&= & \underset{t\in I}{\sup }%
\left\vert (\Psi (t)-\Psi (a))^{1-\xi }\mathscr{B}_{f}x(t)-\mathscr{B}_{f}y (t)\right\vert\nonumber \\
&\leq & L_{f}\left\{ 1+L_{\widetilde{\mathfrak{W}}}\right\} \left\Vert
x-y\right\Vert _{C_{1-\xi ;\Psi }(\Delta,\mathbb{R})}\frac{\Gamma \left( \xi \right) }{%
\Gamma \left( \mu +\xi \right) }~~\underset{t\in I}{\sup }\left( \Psi
\left( t\right) -\Psi \left( a\right) \right) ^{\mu }  \nonumber \\
&\leq & L_{f}\left\{ 1+L_{\widetilde{\mathfrak{W}}}\right\} \frac{\Gamma
\left( \xi \right) }{\Gamma \left( \mu +\xi \right) }\left( \Psi
\left( b\right) -\Psi \left( a\right) \right) ^{\mu }\left\Vert
x-y\right\Vert _{C_{1-\xi ;\Psi }(\Delta,\mathbb{R})}.
\end{eqnarray}

Condition (d) ensures that $\mathscr{B}_{f}$ is a contraction with the norm $\left\Vert \cdot \right\Vert _{C_{1-\xi ,\Psi }(\Delta,\mathbb{R})}$ on $C_{1-\xi ,\Psi }(\Delta,\mathbb{R})$. Thus, by means of Banach fixed point ({\rm Theorem \ref{wul}}) the problem {\rm Eq.(\ref{1})} and {\rm Eq.(\ref{11})} has a unique solution in $C_{1-\xi ,\Psi }(\Delta,\mathbb{R})$. This concludes the first part of the proof.

{\rm(ii)} Now, we consider the $y\in C_{1-\xi ,\Psi }^{1}\left( \Delta,\mathbb{R}\right) $ satisfying the fractional inequality {\rm Eq.(\ref{2})}  and we denote by $x\in C_{1-\xi ,\Psi }^{1}(\Delta,\mathbb{R})$ the unique solution to the fractional Cauchy problem
\begin{equation}
\left\{ 
\begin{array}{ccl}
{}^{\mathbf{H}}{\mathbf{D}}_{a^{+}}^{\mu ,\eta ;\Psi }x(t) & = & f\left( t,x(t),%
\widetilde{\mathfrak{W}}\left( x\right) (t)\right) ,~ t\in \Delta \\ 
\mathbf{I}_{a{+}}^{1-\xi ;\Psi }x(a) & = & y(a).%
\end{array}%
\right. 
\end{equation}

Using the condition {\rm(a)}, we obtain
\begin{equation}
x(t)=\mathbf{M}^{\Psi}_{\xi}(t,a)y(a)+\dfrac{1}{
\Gamma (\mu )}\int_{a}^{t}\mathbf{Q}_{\Psi }^{\mu }\left( t,s\right) f\left(
x,x(s),\widetilde{\mathfrak{W}}\left( x\right) (s)\right) \,{\mbox{d}}s,
\end{equation}%
for $t\in \Delta$. Since $y\in C_{1-\xi ,\Psi }^{1}\left( \Delta,\mathbb{R}\right) $ satisfies inequality {\rm Eq.(\ref{2})}, by means of the {\rm Remark \ref{R5}}, we have
\begin{eqnarray*}
&&\left\vert y(t)-y(a)\mathbf{M}^{\Psi}_{\xi}(t,a)-\frac{1}{\Gamma (\mu )}\int_{a}^{t}\mathbf{Q}_{\Psi }^{\mu }\left( t,s\right)
f\left( s,y(s),\widetilde{\mathfrak{W}}\left( y\right) (s)\right) \,{\mbox{d}}%
s\right\vert  \\
&\leq &\frac{(\Psi (t)-\Psi (a))^{\mu }}{\Gamma (\mu +1)}\varepsilon.
\end{eqnarray*}%

Therefore for $t\in \lbrack a,b]$, we can write 
\begin{eqnarray}
&&\left\vert y(t)-x(t)\right\vert   \notag \\
&=&\left\vert y(t)-\mathbf{M}^{\Psi}_{\xi}(t,a)
y(a)-\dfrac{1}{\Gamma (\mu )}\int_{a}^{t}\mathbf{Q}_{\Psi }^{\mu }\left(
t,s\right) f\left( s,x(s),\widetilde{\mathfrak{W}}\left( x\right) (s)\right) \,{\mbox{d}%
}s\right\vert   \notag \\
&\leq &\left\vert y(t)-\mathbf{M}^{\Psi}_{\xi}(t,a)y(a)-\dfrac{1}{\Gamma (\mu )}\int_{a}^{t}\mathbf{Q}_{\Psi }^{\mu }\left(
t,s\right) f\left( s,y(s),\widetilde{\mathfrak{W}}\left( y\right) (s)\right) \,{\mbox{d}%
}s\right\vert +  \notag \\
&&+\dfrac{1}{\Gamma (\mu )}\int_{a}^{t}\mathbf{Q}_{\Psi }^{\mu }\left(
t,s\right) \left\vert f\left( s,y(s),\widetilde{\mathfrak{W}}\left( y\right) (s)\right)
-f\left( s,x(s),\widetilde{\mathfrak{W}}\left( x\right) (s)\right) \right\vert \,{%
\mbox{d}}s  \notag \\
&\leq &\frac{(\Psi (t)-\Psi (a))^{\mu }}{\Gamma (\mu +1)}\varepsilon +\dfrac{L_{f}}{\Gamma(\mu
)}\int_{a}^{t}\mathbf{Q}_{\Psi }^{\mu }\left( t,s\right) \left\{
|y(s)-x(s)|+L_{\widetilde{\mathfrak{W}}}\left\vert y(s)-x(s)\right\vert \right\} \,{\mbox{d}}s 
\notag \\
&=&\frac{(\Psi (t)-\Psi (a))^{\mu }}{\Gamma (\mu +1)}\varepsilon +\dfrac{%
L_{f}(1+L_{\widetilde{\mathfrak{W}}})}{\Gamma (\mu )}\int_{a}^{t}\mathbf{Q}_{\Psi }^{\mu }\left(
t,s\right) |y(s)-x(s)|\,{\mbox{d}}s.
\end{eqnarray}

Using the Gronwall lemma {\rm (Lemma \ref{L1})} we get, for $\Delta$, 
\begin{equation}
\left\vert y(t)-x(t)\right\vert \leq \frac{(\Psi (t)-\Psi (a))^{\mu }}{\Gamma (\mu +1)}\varepsilon  \mathcal{E}_{\mu }\left[ \frac{L_{f}(1+L_{\widetilde{\mathfrak{W}}})}{\Gamma (\mu )}\Gamma (\mu )(\Psi (t)-\Psi (a))^{\mu }\right]. 
\end{equation}

Therefore
\begin{align*}
\left\Vert y-x\right\Vert _{C_{1-\xi };\Psi } 
&=  \underset{t\in I}{\sup }%
\left\vert (\Psi (t)-\Psi (a))^{1-\xi }y(t)-x(t) \right\vert \\
&\leq \frac{(\Psi (b)-\Psi (a))^{\mu+1-\xi }}{\Gamma (\mu +1)}  \mathcal{E}_{\mu }\left[ L_{f}(1+L_{\widetilde{\mathfrak{W}}})(\Psi (b)-\Psi (a))^{\mu }\right]\varepsilon\\
&=c\,\varepsilon ,
\end{align*}
where 
\begin{equation*}
c:=\frac{(\Psi (b)-\Psi (a))^{\mu+1-\xi }}{\Gamma (\mu +1)}  \mathcal{E}_{\mu }\left[ L_{f}(1+L_{\widetilde{\mathfrak{W}}})(\Psi (b)-\Psi (a))^{\mu }\right]
\end{equation*}%
with $\mathcal{E}_{\mu }(\cdot )$ a Mittag-Leffler function. This proves the solution of the problem {\rm Eq.(\ref{1})} and {\rm Eq.(\ref{11})} is Ulam--Hyers stable. 
\end{proof}


\section{Generalized Ulam-Hyers-Rassias stability}

In this section, another our main result investigate the uniqueness and stability of Ulam-Hyers-Rassias of the Cauchy fractional problem Eq.(\ref{1})--Eq.(\ref{11}) on the infinite interval $I=[a,\infty )$.

\begin{teorema}\label{T6}
Consider the following:

{\rm($\tilde{a}$)} $f\in C_{1-\xi ,\Psi }([a,\infty )\times \mathbb{R}^{2}, \mathbb{R})$; $\widetilde{\mathfrak{W}}\in C((C_{1-\xi ,\Psi }\Delta,\mathbb{R}),(C_{1-\xi ,\Psi }(\Delta,\mathbb{R})))$;

{\rm($\tilde{b}$)} There exists non-decreasing function $\widetilde{L}_{f}\in C([a,\infty ),\mathbb{R}%
_{+})$ such that 
\begin{equation*}
|f(t,u_{1},u_{2})-f(t,v_{1},v_{2})|\leq \widetilde{L}_{f}(t)\left( \left\vert
u_{1}-v_{1}\right\vert +|u_{2}-v_{2}|\right),
\end{equation*}
for $t\in \lbrack a,\infty )$, and $u_{i},v_{i}\in \mathbb{R}$ with $i=1,2$;

{\rm($\tilde{c}$)} There exists non-decreasing function $\widetilde{L}_{\widetilde{\mathfrak{W}}}\in C([a,\infty ),\mathbb{R}%
_{+})$ such that 
\begin{equation*}
|\widetilde{\mathfrak{W}}\left( x\right) (t)-\widetilde{\mathfrak{W}}\left( y\right) (t)|\leq \widetilde{L}%
_{\widetilde{\mathfrak{W}}}(t)|x(t)-y(t)|
\end{equation*}%
for $x,y\in C_{1-\xi ,\Psi }[a,\infty )$ and $t\in \lbrack a,\infty )$;

{\rm ($\tilde{d}$)} The function $\varphi \in C_{1-\xi ,\Psi }[a,\infty )$ is
increasing;

{\rm ($\tilde{e}$)} There exists $\lambda >0$ such that 
\begin{equation}
\dfrac{1}{\Gamma (\mu )}\int_{a}^{t}\mathbf{Q}_{\Psi }^{\mu }\left( t,s\right)
\varphi (s)\,{\mbox{d}}s\leq \lambda \,\varphi (t)
\end{equation}%
with $t\in \lbrack a,\infty )$.

Then,

{\rm(1)} the fractional problem {\rm Eq.(\ref{1})}--{\rm Eq.(\ref{11})}
has a unique solution in $C_{1-\xi,\Psi}([a,\infty),\mathbb{R})$;

{\rm(2)} the solution of the fractional problem {\rm Eq.(\ref{1})}--{\rm Eq.(\ref{11})} is generalized Ulam-Hyers-Rassias stable with respect to $\varphi$.
\end{teorema}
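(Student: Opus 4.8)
The plan is to follow the architecture of the proof of Theorem \ref{T0}, but to replace the ordinary Banach contraction principle by the Diaz--Margolis alternative (Theorem \ref{wul}). On the unbounded interval $[a,\infty)$ the bare supremum seminorm no longer produces a genuine contraction (the factor $(\Psi(b)-\Psi(a))^{\mu}$ that saved us on $[a,b]$ now blows up), so the first move is to pass to a generalized complete metric weighted by $\varphi$. Exactly as before, condition $(\tilde a)$ together with Theorems \ref{T2} and \ref{T3} shows that the Cauchy problem Eq.(\ref{1})--Eq.(\ref{11}) is equivalent to the fixed-point equation
\[
x(t)=\mathbf{M}^{\Psi}_{\xi}(t,a)\delta+\frac{1}{\Gamma(\mu)}\int_{a}^{t}\mathbf{Q}_{\Psi}^{\mu}(t,s)\,f\big(s,x(s),\widetilde{\mathfrak{W}}(x)(s)\big)\,{\mbox{d}}s=\mathscr{B}_f x(t),
\]
so that solutions of the problem are precisely the fixed points of $\mathscr{B}_f$ on $X=C_{1-\xi,\Psi}([a,\infty),\mathbb{R})$.

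Next I would equip $X$ with the $\varphi$-weighted generalized metric
\[
d(u,v)=\inf\Big\{C\in[0,\infty]:\ (\Psi(t)-\Psi(a))^{1-\xi}\,|u(t)-v(t)|\leq C\,\varphi(t),\ \forall\,t\in[a,\infty)\Big\},
\]
and verify that $(X,d)$ is a generalized complete metric space (allowing the value $+\infty$), which is exactly the setting required by Theorem \ref{wul}. The role of hypothesis $(\tilde e)$ is to reproduce the self-improving estimate that made the classical proof work: if $d(u,v)\le C$ then $|u(s)-v(s)|\le C\varphi(s)(\Psi(s)-\Psi(a))^{\xi-1}$, and feeding this into $\mathscr{B}_f$ together with the Lipschitz bounds $(\tilde b)$ and $(\tilde c)$ gives
\[
|\mathscr{B}_f u(t)-\mathscr{B}_f v(t)|\leq \frac{1}{\Gamma(\mu)}\int_a^t \mathbf{Q}_{\Psi}^{\mu}(t,s)\,\widetilde{L}_f(s)\big(1+\widetilde{L}_{\widetilde{\mathfrak{W}}}(s)\big)\,|u(s)-v(s)|\,{\mbox{d}}s.
\]
Using the monotonicity of $\widetilde{L}_f$ and $\widetilde{L}_{\widetilde{\mathfrak{W}}}$ to pull the Lipschitz factor out at the endpoint $t$, and then applying $(\tilde e)$ to the remaining $\varphi$-integral, collapses the right-hand side to $\widetilde{L}_f(t)\big(1+\widetilde{L}_{\widetilde{\mathfrak{W}}}(t)\big)\lambda\,C\,\varphi(t)$, whence $d(\mathscr{B}_f u,\mathscr{B}_f v)\leq L\,d(u,v)$. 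Since $d(\mathscr{B}_f x_0,x_0)<\infty$ for a suitable $x_0\in X$, Theorem \ref{wul}(1)--(2) then yields a unique fixed point $x^{\ast}\in X$, proving (1).

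For (2), let $y\in C^{1}_{1-\xi,\Psi}([a,\infty),\mathbb{R})$ satisfy inequality (\ref{3}). By Remark \ref{R55} combined with $(\tilde e)$,
\[
\Big|y(t)-\mathbf{M}^{\Psi}_{\xi}(t,a)y(a)-\frac{1}{\Gamma(\mu)}\int_a^t\mathbf{Q}_{\Psi}^{\mu}(t,s)f\big(s,y(s),\widetilde{\mathfrak{W}}(y)(s)\big)\,{\mbox{d}}s\Big|\leq\frac{1}{\Gamma(\mu)}\int_a^t\mathbf{Q}_{\Psi}^{\mu}(t,s)\varphi(s)\,{\mbox{d}}s\leq\lambda\,\varphi(t),
\]
which says precisely that $d(y,\mathscr{B}_f y)\leq\lambda<\infty$, so $y$ belongs to the set $X^{\ast}$ appearing in Theorem \ref{wul}. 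Invoking part (3) of that theorem with the fixed point $x^{\ast}$ gives $d(y,x^{\ast})\leq\frac{1}{1-L}\,d(\mathscr{B}_f y,y)\leq\frac{\lambda}{1-L}$; unwinding the definition of $d$ this is exactly $(\Psi(t)-\Psi(a))^{1-\xi}|y(t)-x^{\ast}(t)|\leq c_{\varphi}\,\varphi(t)$ with $c_{\varphi}=\lambda/(1-L)$, i.e. generalized Ulam--Hyers--Rassias stability.

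The step I expect to be the genuine obstacle is securing the strict contraction with a constant $L<1$. Because $\widetilde{L}_f$ and $\widetilde{L}_{\widetilde{\mathfrak{W}}}$ are only assumed continuous and non-decreasing on the unbounded interval, the product $\widetilde{L}_f(t)\big(1+\widetilde{L}_{\widetilde{\mathfrak{W}}}(t)\big)$ may be unbounded, and then $\lambda\sup_{t}\widetilde{L}_f(t)(1+\widetilde{L}_{\widetilde{\mathfrak{W}}}(t))$ need not be finite, let alone less than $1$. To make the argument rigorous I would either impose the standing smallness requirement $L:=\lambda\sup_{t\in[a,\infty)}\widetilde{L}_f(t)\big(1+\widetilde{L}_{\widetilde{\mathfrak{W}}}(t)\big)<1$, or absorb the Lipschitz factor into the weight by replacing $\varphi$ with $\widetilde{L}_f(1+\widetilde{L}_{\widetilde{\mathfrak{W}}})\varphi$ in the metric and strengthening $(\tilde e)$ accordingly; with such a condition in force, all remaining steps are the routine manipulations of $\mathbf{Q}_{\Psi}^{\mu}$ already carried out in Theorem \ref{T0}.
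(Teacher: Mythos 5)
Your route is genuinely different from the paper's. The paper proves part (1) only by saying one "follows the steps of Theorem \ref{T0}" (the plain contraction in the $\left\Vert \cdot \right\Vert_{C_{1-\xi,\Psi}}$ norm), and it proves part (2) not with a fixed-point alternative but with the $\Psi$-fractional Gronwall lemma (Lemma \ref{L1}): starting from Remark \ref{R55} and ($\tilde{e}$) it derives
\[
|y(t)-x(t)|\leq \lambda\varphi(t)+\frac{\widetilde{L}_{f}(t)\bigl(1+\widetilde{L}_{\widetilde{\mathfrak{W}}}(t)\bigr)}{\Gamma(\mu)}\int_{a}^{t}\mathbf{Q}_{\Psi}^{\mu}(t,s)\,|y(s)-x(s)|\,{\mbox{d}}s,
\]
then Gronwall gives $|y(t)-x(t)|\leq\lambda\varphi(t)\mathcal{E}_{\mu}\bigl[\widetilde{L}_{f}(t)(1+\widetilde{L}_{\widetilde{\mathfrak{W}}}(t))(\Psi(t)-\Psi(a))^{\mu}\bigr]$, and the constant is $c_{\varphi}=\lambda\,\tilde{\Psi}\,\mathcal{E}_{\mu}(\tilde{K})$. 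The Gronwall route buys exactly what your route lacks: it needs no smallness condition, since the Mittag-Leffler factor absorbs an arbitrarily large Lipschitz product, whereas Diaz--Margolis demands a strict contraction. Your route buys a unified argument (existence and stability from the same theorem, stability falling out of part (3) of Theorem \ref{wul} with the clean constant $\lambda/(1-L)$), and it confronts head-on the infinite-interval difficulty that the paper waves away.

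That said, your proposal as written does not prove the theorem as stated: the requirement $L=\lambda\sup_{t}\widetilde{L}_{f}(t)\bigl(1+\widetilde{L}_{\widetilde{\mathfrak{W}}}(t)\bigr)<1$ is an extra hypothesis, and it can easily fail since $\widetilde{L}_{f}$, $\widetilde{L}_{\widetilde{\mathfrak{W}}}$ are only assumed continuous and non-decreasing, hence possibly unbounded. You flag this honestly, but the patch changes the statement. Two observations put this in context. First, the paper's own proof is in no better shape: there is no analogue of condition (d) among ($\tilde{a}$)--($\tilde{e}$), so "following Theorem \ref{T0}" cannot yield a contraction on $[a,\infty)$ without further assumptions, and the paper's part (2) silently assumes $\tilde{\Psi}=\sup_{t}(\Psi(t)-\Psi(a))^{1-\xi}<\infty$ and $\tilde{K}=\sup_{t}\widetilde{L}_{f}(t)(1+\widetilde{L}_{\widetilde{\mathfrak{W}}}(t))(\Psi(t)-\Psi(a))^{\mu}<\infty$, neither of which follows from the stated hypotheses; so your concern identifies a real defect rather than a feature your method alone suffers from. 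Second, there is an internal inconsistency in your sketch that you should repair: you define $d$ with the weight $(\Psi(t)-\Psi(a))^{1-\xi}$ (as the generalized Ulam--Hyers--Rassias conclusion requires), but your contraction estimate and the bound $d(y,\mathscr{B}_{f}y)\leq\lambda$ use the unweighted inequality $|u(s)-v(s)|\leq C\varphi(s)$ together with ($\tilde{e}$) as stated, which controls $\int\mathbf{Q}_{\Psi}^{\mu}(t,s)\varphi(s)\,{\mbox{d}}s$ but not $\int\mathbf{Q}_{\Psi}^{\mu}(t,s)\varphi(s)(\Psi(s)-\Psi(a))^{\xi-1}\,{\mbox{d}}s$. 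You must either work with the unweighted metric throughout (and then passing to the weighted conclusion again requires $\tilde{\Psi}<\infty$) or strengthen ($\tilde{e}$) to the weighted kernel.
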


\begin{proof}
With the conditions ($\tilde{a}$), ($\tilde{b}$) and ($\tilde{c}$) and following the steps in the proof of {Theorem \ref 
{T0}},  one can easily prove that  
the problem \textrm{Eq.(\ref{1})}--{\rm Eq.(\ref{11})}, has a
unique solution in $C_{1-\xi,\Psi}^1([a,\infty),\mathbb{R})$. 

Let $y \in C_{1-\xi,\Psi}^1 ([a,\infty),\mathbb{R})$ satisfying the inequality {\rm(\ref{3})}. Let $x \in C_{1-\xi,\Psi}^1([a,\infty),\mathbb{R})$ is a unique solution of the following fractional Cauchy problem
\begin{equation*}
\left\{ 
\begin{array}{ccl}
{}^{\mathbf{H}}{\mathbf{D}}_{a^{+}}^{\mu ,\eta ,\Psi }x(t) & = & f\left( t,x(t),%
\widetilde{\mathfrak{W}}\left( x\right) (t)\right) ,~ t\in \lbrack a,\infty ) \\ 
\mathbf{I}_{a{+}}^{1-\xi ,\Psi }x(a) & = & y(a).
\end{array}
\right. 
\end{equation*}

Then, its equivalent to the Volterra integral equation is 
\begin{equation}
x(t)=\mathbf{M}^{\Psi}_{\xi}(t,a)y(a)+\frac{1}{%
\Gamma (\mu )}\int_{a}^{t}\mathbf{Q}_{\Psi }^{\mu }\left( t,s\right) f\left( s,x(s),\widetilde{\mathfrak{W}}\left( x\right) (s)\right) \,{\mbox{d}}s,  \label{B}
\end{equation}%
with $\,\,t\in \lbrack a,\infty )$.

Since $y \in C_{1-\xi,\Psi}^1 (\Delta,\mathbb{R})$ satisfies inequality {\rm (\ref{3})}, by means of the {\rm Remark \ref{R55}}, $y$ satisfies the following fractional integral inequality
\begin{eqnarray}\label{C}
&&\left\vert y(t)-\mathbf{M}^{\Psi}_{\xi}(t,a)
y(a)-\dfrac{1}{\Gamma (\mu )}\int_{a}^{t}\mathbf{Q}_{\Psi }^{\mu }\left(
t,s\right) f\left( s,x(s),\widetilde{\mathfrak{W}}\left( x\right) (s)\right) \,{\mbox{d}%
}s\right\vert   \notag \\
&\leq &\dfrac{1}{\Gamma (\mu )}\int_{a}^{t}\mathbf{Q}_{\Psi }^{\mu }\left(
t,s\right) \varphi (s)\,{\mbox{d}}s\leq \lambda \varphi (t)
\end{eqnarray}%
with $\,\,t\in \lbrack a,\infty )$.

From {\rm Eq.(\ref{B})} and {\rm Eq.(\ref{C})}, we obtain
\begin{eqnarray*}
&&\left\vert y(t)-x(t)\right\vert  \\
&=&\left\vert y(t)-\mathbf{M}^{\Psi}_{\xi}(t,a)%
y(a)-\frac{1}{\Gamma (\mu )}\int_{a}^{t}\mathbf{Q}_{\Psi }^{\mu }\left(
t,s\right) f\left( s,x(s),\widetilde{\mathfrak{W}}\left( x\right) (s)\right) \,{\mbox{d}%
}s\right\vert  \\
&\leq &\left\vert y(t)-\mathbf{M}^{\Psi}_{\xi}(t,a)y(a)-\dfrac{1}{\Gamma (\mu )}\int_{a}^{t}\mathbf{Q}_{\Psi }^{\mu }\left(
t,s\right) f\left( s,y(s),\widetilde{\mathfrak{W}}\left( y\right) (s)\right) \,{\mbox{d}%
}s\right\vert +   \\
&&+\dfrac{1}{\Gamma (\mu )}\int_{a}^{t}\mathbf{Q}_{\Psi }^{\mu }\left(
t,s\right) \left\vert f\left( s,y(s),\widetilde{\mathfrak{W}}\left( y\right) (s)\right)
-f\left( s,x(s),\widetilde{\mathfrak{W}}\left( x\right) (s)\right) \right\vert \,{%
\mbox{d}}s  \\
&\leq &\lambda \varphi (t)+\dfrac{1}{\Gamma (\mu )}\int_{a}^{t}\mathbf{Q}_{\Psi
}^{\mu }\left( t,s\right) \widetilde{L}_{f}(s)\left\{ |y(s)-x(s)|+%
\widetilde{\mathfrak{W}}\left( y\right) (s)-\widetilde{\mathfrak{W}}\left( x\right) (s)\right\} \,{\mbox{d}}s \\
&\leq &\lambda \varphi (t)+\frac{1}{\Gamma (\mu )}\int_{a}^{t}\mathbf{Q}_{\Psi
}^{\mu }\left( t,s\right) \widetilde{L}_{f}(s)(1+\widetilde{L}%
_{\widetilde{\mathfrak{W}}}(s))|y(s)-x(s)|\,{\mbox{d}}s\\
&\leq &\lambda \varphi (t)+\frac{\widetilde{L}_{f}(t)(1+\widetilde{L}%
_{\widetilde{\mathfrak{W}}}(t))}{\Gamma(\mu )}\int_{a}^{t}\mathbf{Q}_{\Psi
}^{\mu }\left( t,s\right) |y(s)-x(s)|\,{\mbox{d}}s.
\end{eqnarray*}

Using the Gronwall lemma {\rm(Lemma \ref{L1})}, we have for $t\in
\lbrack a,\infty )$ 
\begin{equation*}
|y(t)-x(t)|\leq \lambda \varphi (t)\mathcal{E}_{\mu }\left[{\widetilde{L}_{f}(t)(1+\widetilde{L}%
_{\widetilde{\mathfrak{W}}}(t))}(\Psi (t)-\Psi (a))^{\mu }\right]
 \end{equation*}
 
Therefore
\begin{equation}
(\Psi (t)-\Psi (a))^{1-\xi }| y(t) - x(t) |\leq c_{\varphi }\varphi (t), ~t\in
[a,\infty )
 \label{C1}
\end{equation}%
where $c_{\varphi }=\lambda \,\tilde{\Psi}\,\mathcal{E}_{\mu }\left( \tilde{K}\right) $,~
$\tilde{\Psi}=\underset{t\in [a,\infty )}{\sup }%
\left\vert (\Psi (t)-\Psi (a))^{1-\xi }\right\vert < \infty $ and $$\tilde{K}=\underset{t\in [a,\infty )}{\sup }%
\left\vert {\widetilde{L}_{f}(t)(1+\widetilde{L}%
_{\widetilde{\mathfrak{W}}}(t))\,(\Psi (t)-\Psi (a))^{\mu }}\right\vert < \infty.$$ 
 So, by {\rm Eq.(\ref{C1})}, the solution of the problem {\rm Eq.(\ref{1})}--{\rm Eq.(\ref{11})} is generalized Ulam-Hyers-Rassias stable with respect to $
\varphi $. 
\end{proof}

We can conclude that the investigation that was carried out by means of Theorem \ref{T0} and Theorem \ref{T6} enabled a new class of Ulam--Hyers and Ulam-Hyers-Rassias stabilities on the compact interval $\Delta=[a,b]$ and on  $I=[a,\infty )$. In this sense, the new results presented here are indeed important for the fractional calculus, in particular, for fractional analysis, enabling a growth in the area. In the following section, we present a brief application of this new class of Ulam--Hyers stability, as particular cases of Theorem \ref{T0} and Theorem \ref{T6}.

\section{Application}

In this section as application, we will discuss a fractional Cauchy problem involving the Hadamard derivative and the stability of its solution, in both cases, Ulam--Hyers and generalized Ulam--Hyers--Rassias. The results are presented as theorems.

Consider the following fractional Cauchy problem 
\begin{equation*}
{}^{HD}\mathcal{D}_{a_{+}}^{\mu }x(t)=f\left( t,x(t),\frac{1}{\Gamma
(\mu )}\int_{a}^{t}\ln \left( \dfrac{t}{s}\right) ^{\mu -1}K(t,s,x(s))\,%
{\mbox{d}}s\right) ,\quad t\in I 
\end{equation*}
where $I= [0,1]$ or $[ 0,\infty)$, ~${}^{HD}\mathcal{D}_{a_{+}}^{\mu }(\cdot )$ is the Hadamard
fractional derivative and the inequalities 
\begin{equation*}
\left\vert {}^{HD}\mathcal{D}_{a_{+}}^{\mu }y(t)-f\left( t,y(t),\dfrac{1}{%
\Gamma (\mu )}\int_{a}^{t}\ln \left( \frac{t}{s}\right) ^{\mu
-1}K(t,s,y(s))\,{\mbox{d}}s\right) \right\vert \leq \varepsilon ,
\end{equation*}
\begin{equation*}
\left\vert {}^{HD}\mathcal{D}_{a_{+}}^{\mu }y(t)-f\left( t,y(t),\dfrac{1}{%
\Gamma (\mu )}\int_{a}^{t}\ln \left( \frac{t}{s}\right) ^{\mu
-1}K(t,s,y(s))\,{\mbox{d}}s\right) \right\vert \leq \varphi (t)
\end{equation*}
for $t\in I$.

In this case, {Theorem \ref{T0}} and {Theorem \ref{T6}} become

\begin{teorema}\label{TA1}
Suppose that

{\rm (A1)} $f \in C_{1-\xi,\Psi} ([0,1]\times \mathbb{R}^2, \mathbb{R})$; $ \widetilde{\mathfrak{W}} \in C_{1-\xi,\Psi}((C_{1-\xi,\Psi}[a,b],\mathbb{R}), 
(C_{1-\xi,\Psi}[a,b],\mathbb{R}))$; 

{\rm (A2)} There exists $L_f > 0$ such that, for all $t \in [0,1]$, 
$$
|f(t,u_1,u_2)-f(t,v_1,v_2)| \leq L_f \sum_{i=1}^2 |u_i - v_i|
$$
with $u_i,v_i \in \mathbb{R}$ and $i=1,2$;

0{\rm (A3)} There exists $L_K > 0$ such that
$$
| K(t,s,x(s)) - K(t,s,y(s))| \leq L_K |x(t) - y(t)|
$$
with $x,y \in C_{1-\xi, \Psi}[0,1]$ and $t \in [0,1]$.

{\rm (A4)} $\displaystyle \dfrac{\left(\ln\left(\dfrac{b}{a}\right)\right)^{\mu}}{\Gamma(2\mu)} \, \Gamma(\mu) \, L_f(1+L_{K}) < 1$.

Then

{\rm (i)} The problem {\rm Eq.(\ref{1})}--{\rm Eq.(\ref{11})} has a unique solution in $C_{1-\xi,\Psi}([0,1],\mathbb{R})$.

{\rm (ii)} The solution of {\rm Eq.(\ref{1})} is Ulam--Hyers stable.
\end{teorema}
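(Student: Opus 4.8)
The plan is to recognize Theorem \ref{TA1} as the special case of Theorem \ref{T0} obtained by choosing $\Psi(t)=\ln t$ and type $\eta=0$. First I would record the consequences of this choice. Since $\Psi(t)=\ln t$ is increasing with $\Psi'(t)=1/t\neq 0$ and belongs to $C^1$ on any interval bounded away from the origin, the weighted kernel becomes $\mathbf{Q}_{\Psi}^{\mu}(t,s)=\frac{1}{s}\left(\ln\frac{t}{s}\right)^{\mu-1}$, so the $\Psi$-Riemann--Liouville integral $\mathbf{I}_{a^+}^{\mu,\Psi}$ reduces to the Hadamard fractional integral and ${}^{\mathbf H}\mathbf D_{a^+}^{\mu,0,\Psi}$ reduces to the Hadamard derivative ${}^{HD}\mathcal D_{a^+}^{\mu}$. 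Because $\xi=\mu+\eta(1-\mu)$, the choice $\eta=0$ forces $\xi=\mu$, whence $\Gamma(\xi)=\Gamma(\mu)$, $\Gamma(\xi+\mu)=\Gamma(2\mu)$ and $(\Psi(b)-\Psi(a))^{\mu}=\left(\ln\frac{b}{a}\right)^{\mu}$.

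With these identifications the Hadamard Cauchy problem of this section is precisely the abstract problem Eq.(\ref{1})--Eq.(\ref{11}), the abstract Volterra operator being realized by $\widetilde{\mathfrak{W}}(x)(t)=\frac{1}{\Gamma(\mu)}\int_{a}^{t}\left(\ln\frac{t}{s}\right)^{\mu-1}K(t,s,x(s))\,\mathrm{d}s$. Next I would check that the hypotheses (A1)--(A4) are exactly the specializations of (a)--(d). Conditions (A1) and (A2) coincide verbatim with (a) and (b). Substituting $\xi=\mu$ in condition (d) turns the factor $\frac{\Gamma(\xi)(\Psi(b)-\Psi(a))^{\mu}}{\Gamma(\xi+\mu)}$ into $\frac{\Gamma(\mu)\left(\ln\frac{b}{a}\right)^{\mu}}{\Gamma(2\mu)}$, so (d) becomes (A4) once $L_{\widetilde{\mathfrak{W}}}$ is identified with $L_K$. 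It remains to produce the operator Lipschitz bound (c) from the kernel bound (A3), and this is the step I expect to be the main obstacle, because (A3) controls $K$ pointwise whereas $\widetilde{\mathfrak{W}}$ is an integral operator. Estimating $|\widetilde{\mathfrak{W}}(x)(t)-\widetilde{\mathfrak{W}}(y)(t)|\le\frac{L_K}{\Gamma(\mu)}\int_{a}^{t}\left(\ln\frac{t}{s}\right)^{\mu-1}|x(s)-y(s)|\,\mathrm{d}s$ and bounding the remaining integral by a constant multiple of $\|x-y\|_{C_{1-\xi,\Psi}}$, exactly as in the chain of inequalities leading to (\ref{js}), yields the sought constant $L_{\widetilde{\mathfrak{W}}}$ in terms of $L_K$.

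Finally, once (a)--(d) are in force, I would simply invoke Theorem \ref{T0}: part (i) supplies the unique solution in $C_{1-\xi,\Psi}([0,1],\mathbb R)$ with $\xi=\mu$ and $\Psi=\ln$, and part (ii) gives the Ulam--Hyers stability of the solution of Eq.(\ref{1}), which are precisely conclusions (i) and (ii). The only genuinely non-mechanical point is the reconciliation of the pointwise kernel hypothesis (A3) with the operator hypothesis (c); if instead one redoes the contraction estimate of Theorem \ref{T0} directly for the composite map $x\mapsto\mathbf{M}^{\Psi}_{\mu}(t,a)\delta+\mathbf{I}_{a^+}^{\mu,\Psi}f(\cdot,x,\widetilde{\mathfrak{W}}(x))$ with $\Psi=\ln$, the Lipschitz constant $L_K$ of the inner Hadamard kernel is absorbed into the same factor $\frac{\Gamma(\mu)\left(\ln\frac{b}{a}\right)^{\mu}}{\Gamma(2\mu)}$, and condition (A4) again furnishes the contraction.
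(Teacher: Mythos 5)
Your proposal is correct and follows essentially the same route as the paper, whose entire proof of this theorem is the single remark that it ``follows the same steps as in Theorem \ref{T0}''; you simply make the specialization $\Psi(t)=\ln t$, $\eta=0$, $\xi=\mu$ explicit. In fact your treatment is more careful than the paper's, since you identify and resolve the one genuine mismatch --- that (A3) is a pointwise bound on the kernel $K$ while hypothesis (c) of Theorem \ref{T0} demands a Lipschitz bound for the integral operator $\widetilde{\mathfrak{W}}$, so that $L_{\widetilde{\mathfrak{W}}}$ is a constant multiple of $L_K$ rather than $L_K$ itself --- a point the paper silently glosses over in stating (A4).
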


\begin{proof}
The proof follows the same steps as in {Theorem \ref{T0}}. 
\end{proof}

\begin{teorema}\label{TA2}
We assume that

{\rm (B1)} $f \in C_{1-\xi,\Psi} ([0,\infty)\times \mathbb{R}^2, \mathbb{R})$; $ \widetilde{\mathfrak{W}} \in C((C_{1-\xi,\Psi}[0,1],\mathbb{R}), (C_{1-\xi,\Psi}[0,1],\mathbb{R}))$; 

{\rm (B2)} There exists non-decreasing function $\widetilde{L}_f \in C ([0,\infty),\mathbb{R}_{+}) $ such that, for all $t \in [0,\infty]$, 
$$
|f(t,u_1,u_2)-f(t,v_1,v_2)| \leq \widetilde{L}_f(t) (|u_1 - v_1| + |u_2-v_2|)
$$
with $u_i,v_i \in \mathbb{R}$ and $i=1,2$;

{\rm (B3)} There exists non-decreasing function $\widetilde{L}_K \in C([0,\infty),\mathbb{R})$ such that
$$
| K(t,s,u)) - K(t,s,v)| \leq \widetilde{L}_K (t) |x(t) - y(t)|
$$
with $x,y \in C_{1-\xi, \Psi}[0,\infty)$ and $t \in [0,\infty)$.

{\rm (B4)} The function $\varphi \in C_{1-\xi,\Psi}[0,\infty)$ is increasing;

{\rm (B5)} There exists $\lambda > 0$ such that
$$
\frac{1}{\Gamma(\mu)} \int_a^t \ln\left( \frac{t}{s} \right)^{\mu -1} \varphi(s) \, {\mbox{d}}s \leq \lambda \varphi(t), \quad t \in [0,\infty).
$$

Then

{\rm (i)} The problem {\rm Eq.(\ref{1})}--{\rm Eq.(\ref{11})} has a unique solution in $C_{1-\xi,\Psi}([0,\infty),\mathbb{R})$.

{\rm (ii)} The solution of {\rm Eq.(\ref{1})} is Ulam-Hyers-Rassias stable with respect to $\varphi$.
\end{teorema}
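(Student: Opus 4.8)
The plan is to derive Theorem~\ref{TA2} as a direct specialization of Theorem~\ref{T6}, rather than reworking the whole argument. Taking $\Psi(t)=\ln t$, the $\Psi$-Hilfer derivative ${}^{\mathbf{H}}{\mathbf{D}}_{a^{+}}^{\mu,\eta,\Psi}$ collapses to the Hadamard derivative ${}^{HD}\mathcal{D}_{a_{+}}^{\mu}$, and the kernel $\mathbf{Q}_{\Psi}^{\mu}(t,s)=\Psi'(s)(\Psi(t)-\Psi(s))^{\mu-1}$ becomes $\frac{1}{s}\left(\ln\frac{t}{s}\right)^{\mu-1}$, i.e. the Hadamard kernel $\left(\ln\frac{t}{s}\right)^{\mu-1}$ integrated against the Hadamard measure $\frac{{\mbox{d}}s}{s}$. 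Under this identification the concrete operator $\widetilde{\mathfrak{W}}(x)(t)=\frac{1}{\Gamma(\mu)}\int_{a}^{t}\left(\ln\frac{t}{s}\right)^{\mu-1}K(t,s,x(s))\,{\mbox{d}}s$ plays the role of the abstract Volterra operator. Hence it suffices to check that hypotheses (B1)--(B5) deliver hypotheses $(\tilde a)$--$(\tilde e)$ of Theorem~\ref{T6}, after which both conclusions follow at once.

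The decisive step is to confirm that $\widetilde{\mathfrak{W}}$ is an abstract Volterra operator mapping $C_{1-\xi,\Psi}[0,\infty)$ into itself and satisfying the pointwise Lipschitz bound $(\tilde c)$. First I would verify the causality and self-map properties: the upper limit $t$ makes $\widetilde{\mathfrak{W}}(x)(t)$ depend only on the restriction of $x$ to $[a,t]$, while continuity of $K$ together with the weight $(\Psi(t)-\Psi(a))^{1-\xi}$ keeps the image in $C_{1-\xi,\Psi}$. For the Lipschitz estimate, using (B3),
\begin{equation*}
\left|\widetilde{\mathfrak{W}}(x)(t)-\widetilde{\mathfrak{W}}(y)(t)\right|\leq \frac{\widetilde{L}_{K}(t)}{\Gamma(\mu)}\,|x(t)-y(t)|\int_{a}^{t}\left(\ln\frac{t}{s}\right)^{\mu-1}\frac{{\mbox{d}}s}{s}=\frac{\widetilde{L}_{K}(t)\left(\ln\frac{t}{a}\right)^{\mu}}{\Gamma(\mu+1)}\,|x(t)-y(t)|,
\end{equation*}
so $(\tilde c)$ holds with $\widetilde{L}_{\widetilde{\mathfrak{W}}}(t):=\frac{\widetilde{L}_{K}(t)\left(\ln(t/a)\right)^{\mu}}{\Gamma(\mu+1)}$. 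Since $\widetilde{L}_{K}$ is non-decreasing and $\left(\ln(t/a)\right)^{\mu}$ is continuous and non-decreasing in $t$, this function lies in $C([0,\infty),\mathbb{R}_{+})$ and is non-decreasing, as required. Hypotheses (B2), (B4) and (B5) transcribe verbatim to $(\tilde b)$, $(\tilde d)$ and $(\tilde e)$, and (B1) gives $(\tilde a)$.

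With $(\tilde a)$--$(\tilde e)$ in hand, Theorem~\ref{T6} applies and yields simultaneously the unique solvability of the problem in $C_{1-\xi,\Psi}^{1}([0,\infty),\mathbb{R})$ and the estimate $(\Psi(t)-\Psi(a))^{1-\xi}|y(t)-x(t)|\leq c_{\varphi}\varphi(t)$, which is precisely the generalized Ulam--Hyers--Rassias stability asserted in (ii), with (i) being the uniqueness clause. No fresh analytic input is needed beyond the Gronwall lemma already invoked in Theorem~\ref{T6}.

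The main obstacle, and the only place demanding real care, is the reduction itself: one must justify that $C_{1-\xi,\Psi}$ with $\Psi=\ln$ is the correct functional setting for the Hadamard problem and that $\widetilde{\mathfrak{W}}$ is well defined and self-mapping there. In particular the evaluation $\int_{a}^{t}\left(\ln\frac{t}{s}\right)^{\mu-1}\frac{{\mbox{d}}s}{s}=\frac{1}{\mu}\left(\ln\frac{t}{a}\right)^{\mu}$ requires the logarithmic kernel to be integrable near the left endpoint, which forces $a>0$; the choices $a=0$ admitted by the interval $[0,1]$ or $[0,\infty)$ in the statement must therefore be read as $a>0$ for the Hadamard calculus to make sense. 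Once this bookkeeping between the $\Psi$-Hilfer and Hadamard formalisms is settled, the proof is a faithful repetition of that of Theorem~\ref{T6}.
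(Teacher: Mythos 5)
Your proposal takes exactly the paper's approach: the paper proves Theorem \ref{TA2} with the single line ``The proof follows the same steps as in Theorem \ref{T6},'' i.e.\ it treats the Hadamard problem as the particular case $\Psi(t)=\ln t$ of Theorem \ref{T6}, which is precisely the reduction you carry out. If anything, your write-up is more complete than the paper's, since you supply the details it omits --- the identification of the kernel $\mathbf{Q}_{\Psi}^{\mu}(t,s)$ with the Hadamard kernel, the explicit Lipschitz function $\widetilde{L}_{\widetilde{\mathfrak{W}}}(t)=\widetilde{L}_{K}(t)\left(\ln(t/a)\right)^{\mu}/\Gamma(\mu+1)$ verifying hypothesis $(\tilde{c})$, and the caveat that $a>0$ is required for the logarithmic kernel to make sense.
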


\begin{proof}
The proof follows the same steps as in { Theorem \ref{T6}}. 
\end{proof}

\section{Concluding Remarks}

The investigation of Ulam--Hyers stabilities of solutions of several types of fractional differential equations is a major motivation for researchers. Here we investigated a new class of Ulam--Hyers stabilities of the differential equation with abstract Volterra operator introduced by means of the $\Psi$-Hilfer fractional derivative on the compact interval $\Delta=[a,b]$ and on the infinite interval $I=[a,\infty)$, making use of Banach's fixed point theorem and Gronwall inequality. In this paper, the results were obtained in the Banach space. It will be whether there is a possibility of investigating such results in other spaces, such as Fréchet and fractional Sobolev spaces. Researches in this follow-up are being worked on and future works will be published.

\section*{Acknowledgment}
{\bf I support financial support of the PNDP-CAPES scholarship of the Pos-Graduate Program in Mathematics UFPA/UFAM.}

\bibliographystyle{amsplain}

\end{document}